   \newtheorem{theorem}{Theorem}
   \newtheorem{proposition}[theorem]{Proposition}
   \newtheorem{lemma}[theorem]{Lemma}
 \theoremstyle{definition}
   \newtheorem{definition}{Definition}
\newbox\qedbox
\newenvironment{Proof}{\smallskip\noindent{\bf Proof.}\hskip \labelsep}%
                        {\hfill\penalty10000\copy\qedbox\par\medskip}
\newenvironment{remark}{\smallskip\noindent{\bf Remark.}\hskip \labelsep}%
                        {\hfill\penalty10000\copy\qedbox\par\medskip}
\newenvironment{example}{\smallskip\noindent{\bf Example.}}%
\begin{document}

\centerline{\Large \bf Weierstrass representation for semi-discrete minimal}
\centerline{\Large \bf surfaces, and comparison of various discretized catenoids}

\medskip
\medskip

\centerline{\large Wayne Rossman, Masashi Yasumoto}

\medskip

\medskip

\begin{quote} {\bf Abstract:} 
We give a Weierstrass type representation for semi-discrete
minimal surfaces in Euclidean $3$-space.  We then give
explicit parametrizations of various smooth, semi-discrete
and fully-discrete catenoids, determined from either
variational or integrable systems principles.  Finally, we state the shared
properties that those various catenoids have.
\end{quote}

\medskip
\medskip
\medskip

\section{Introduction}

The well known minimal surface of revolution in
$\mathbb{R}^3=\{ (x_1,x_2, x_3)^t \ | \ x_j \in \mathbb{R} \}$ called the catenoid, which we refer to as
the "smooth catenoid" here and which can be parametrized by
\begin{equation}\label{eqn:smoothcat}
x(u,v) = \begin{pmatrix} \cosh u \cos v\\ \cosh u \sin v\\ u\end{pmatrix}, \ 
v \in [0,2 \pi ), \ 
u \in \mathbb{R},
\end{equation}
has a number of discretized versions. A fully discretized
version can be found in \cite{PR1} by Polthier and the
first author, which is defined using a variational approach,
that is, those surfaces are triangulated meshes that are critical for
area with respect to smooth variations of the vertex set.  A
different approach for defining fully discrete catenoids, using
quadrilateral faces and based on
integrable systems methods, was found by Bobenko and Pinkall
\cite{BP1}, \cite{BP2}.  Both approaches apply to much wider
classes of surfaces.

One can also consider semi-discrete catenoids, that is,
catenoids that are discretized in only one of the two
parameter directions corresponding to $u$ and $v$ in
\eqref{eqn:smoothcat}.  There are now four choices for how
to proceed with this, by choosing either the $u$ direction or
$v$ direction to discretize, and by choosing to use either
variational principles or integrable systems principles to
determine the discretizations.  Again, these
approaches apply to much wider classes of surfaces.

Here we compare these various smooth,
semi-discrete and fully-discrete catenoids to see in what
ways they do or do not coincide.  For the smooth
and fully-discrete catenoids, the parametrizations have
already been determined, making comparisons between
them elementary.  However, for some of the
semi-discrete cases, we will need to first establish those
parametrizations here.  In particular, we will provide a
Weierstrass representation for determining
semi-discrete minimal surfaces as defined by Mueller and
Wallner \cite{MW2}, \cite{Wa1}.
Construction of the semi-discrete catenoids in particular, via an integrable
systems approach, can be done either with this Weierstrass
representation, or without it (instead using
the results by Mueller and Wallner).
However, the usefulness of the Weierstrass representation
comes when one wishes to consider the full class of
semi-discrete minimal surfaces based on an integrable
systems approach, as this representation gives a
classification of such surfaces in terms of semi-discrete
holomorphic functions. This Weierstrass representation 
can be regarded as a restatement of the definition of such surfaces (Definition \ref{def-minimal}), but in a more explicit form 
that tells us how the surface is constructed from the given dual surface inscribed in a sphere.  

Once we have established this representation for
semi-discrete minimal surfaces (Theorem \ref{semi-discrete weierstrass}), we compare
the various types of catenoids (Theorem \ref{comparison}).

To make semi-discrete catenoids based on variational principles,
Machigashira \cite{Ma1} chose to discretize them in the $u$ direction.
He then classified these surfaces and studied their stability
properties.  The surfaces obtained by Machigashira will be seen 
(Proposition \ref{M-PR-limit}) to be limiting cases of the discrete
catenoids found in \cite{PR1}.

From the point of view of architectural structures in the shape of a
semi-discrete catenoid, Machigashira's catenoids would involve
producing circular-shaped flat pieces that cannot be so efficiently
made as cut-outs from planar sheets, since there would be a large
amount of waste material.  So from the architectural point of view,
a more efficient use of materials would be to discretize in
the $v$ direction instead.  Such semi-discrete catenoids are considered
here as well.

To distinguish between various catenoids, we write the superscript $va$ (resp. $in$) when the catenoid is constructed by a variational (resp. integrable systems) 
approach, and write the subscript $pd$ (resp. $ps$) when the catenoid has a discrete profile curve (resp. smooth profile curve) and the subscript $rd$ (resp. $rs$) 
when the catenoid is discrete (resp. smooth) in the rotational direction. Thus, in total, we consider the seven types of catenoids in Table \ref{catenoids}.

For catenoids with discrete profile curves, we will assume them to have a "neck vertex". In other words, we assume there exists a plane 
of reflective symmetry of the catenoids that is perpendicular to the axis of rotation symmetry and also contains one vertex of each profile curve. We note that there 
do exist discrete catenoids that do not have this neck-vertex symmetry.

\begin{table}\label{catenoids}
\begin{center}
\begin{tabular}{|c|c|} \hline
 & associated authors \\ \hline
smooth catenoid & (classically known surface) \\ \hline
$BP_{pd,rd}^{in}$-catenoid & Bobenko and Pinkall \\ \hline
$PR_{pd,rd}^{va}$-catenoid & Polthier and Rossman \\ \hline
$M_{pd,rs}^{va}$-catenoid & Machigashira \\ \hline
$MW_{pd,rs}^{in}$-catenoid & Mueller and Wallner \\ \hline
$MW_{ps,rd}^{in}$-catenoid & Mueller and Wallner \\ \hline
$M_{ps,rd}^{va}$-catenoid & (Machigashira analogue) \\ \hline
\end{tabular}
\caption{Names of seven types of catenoids}
\end{center}
\end{table}

\begin{theorem}\label{comparison}
After appropriate normalizations, we have the following:
\begin{enumerate}
\item $PR^{va}_{pd,rd}$-catenoid profile curves and $M^{va}_{pd,rs}$-catenoid profile curves are never the same, but 
$PR^{va}_{pd,rd}$-catenoid profile curves converge to $M^{va}_{pd,rs}$-catenoid profile curves as the angle 
of rotation symmetry approaches $0$. 
\item $BP^{in}_{pd,rd}$-catenoids and $MW^{in}_{pd,rs}$-catenoids have the same profile curves.
\item $BP^{in}_{pd,rd}$-catenoid ($MW^{in}_{pd,rs}$-catenoid) profile curves and $PR^{va}_{pd,rd}$-catenoid profile curves are never the same, 
and $BP^{in}_{pd,rd}$-catenoid ($MW^{in}_{pd,rs}$-catenoid) profile curves 
and $M^{va}_{pd,rs}$-catenoid profile curves are never the same.
\item The smooth catenoid and $MW^{in}_{ps,rd}$-catenoid have the same profile curve.
\item $M^{va}_{ps,rd}$-catenoid profile curves and the smooth catenoid's profile curve are never the same. 
$M^{va}_{ps,rd}$-catenoid profile curves converge to the smooth catenoid ($MW^{in}_{ps,rd}$-catenoid) profile curve as the angle 
of rotation symmetry approaches $0$.
\item For all types of catenoids, the profile curves have vertices lying on affinely scaled graphs of the hyperbolic cosine function.
\end{enumerate}
\end{theorem}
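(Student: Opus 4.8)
The plan is to reduce the entire theorem to pairwise comparisons of explicit parametrizations of the seven profile curves, so the first and largest task is to assemble those parametrizations in a common normalized form (neck vertex at height $0$, a single free scaling parameter, and the discretization parameter---the angle of rotational symmetry or the profile mesh size---made explicit). For the smooth catenoid the profile curve is $(r,h)=(\cosh u,u)$ from \eqref{eqn:smoothcat}. For $BP^{in}_{pd,rd}$ and $PR^{va}_{pd,rd}$ we read the vertex data $(r_k,h_k)$ off \cite{BP1,BP2} and \cite{PR1}, and for $M^{va}_{pd,rs}$ off \cite{Ma1}; in each case the vertices are governed by an explicit second-order difference equation---the discrete Euler--Lagrange equation for area in the variational cases, the discrete isothermic/zero-curvature compatibility condition in the integrable case. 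For $MW^{in}_{pd,rs}$ and $MW^{in}_{ps,rd}$ we invoke the Weierstrass representation of Theorem \ref{semi-discrete weierstrass}: the catenoid is the semi-discrete minimal surface produced from the semi-discrete holomorphic data playing the role of $w\mapsto e^{w}$ (equivalently, whose dual surface inscribed in the sphere is the semi-discrete analogue of a round cylinder), discretized in the profile direction in one case and in the rotational direction in the other; carrying out the summation/integration in the representation formula yields the meridian in closed form. Finally, for the new type $M^{va}_{ps,rd}$ I set up the matching variational problem---a rotationally symmetric surface of angle $2\pi/n$ built from $n$ congruent smooth meridians joined by planar strips---and derive the Euler--Lagrange ODE for the meridian, whose first integral is a one-parameter family that can be solved explicitly.

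With all seven parametrizations in hand, items (1)--(5) are elementary. For (2) and (4) one checks that two explicit descriptions agree after matching the free scale: (4) reduces to verifying that the $MW^{in}_{ps,rd}$ meridian solves $r\,r''=1+(r')^{2}$, i.e.\ is a catenary; (2) to identifying the $BP^{in}_{pd,rd}$ vertex recursion with the $MW^{in}_{pd,rs}$ one. For the non-coincidence assertions in (1), (3) and (5) we use that two profile curves with the same neck data coincide iff their defining recursions (or ODEs) coincide, and we show these differ for \emph{every} value of the discretization parameter---e.g.\ by comparing the leading correction terms in a mesh-size expansion, or by exhibiting one explicit vertex triple that violates the other relation. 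The convergence statements follow by letting the angle of rotational symmetry tend to $0$: in (1) this is exactly Proposition \ref{M-PR-limit}, and in (5) the $M^{va}_{ps,rd}$ Euler--Lagrange equation degenerates to $r\,r''=1+(r')^{2}$; a standard continuous-dependence argument promotes convergence of the equations to convergence of the normalized solutions.

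For the uniform statement (6) the goal is to show that for each type the vertices $(r_k,h_k)$---or the entire meridian in the smooth cases---lie on a curve $r=a\cosh\bigl((h-b)/c\bigr)+d$. The smooth catenoid, and hence by (4) the $MW^{in}_{ps,rd}$-catenoid, are immediate. For the integrable-systems catenoids $BP^{in}_{pd,rd}$, $MW^{in}_{pd,rs}$ and $MW^{in}_{ps,rd}$ the explicit formulas display the radii as $r_k=a\cosh(\theta_0+k\delta)$ for constants $\theta_0,\delta$, with $h_k$ an affine function of the index $k$ plus a term proportional to $\sinh(\theta_0+k\delta)$; eliminating $k$ places $(r_k,h_k)$ on an affinely scaled cosh graph, and the same structure, with a continuous parameter, appears for $M^{va}_{ps,rd}$. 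For the two variational discrete-profile catenoids $PR^{va}_{pd,rd}$ and $M^{va}_{pd,rs}$ we instead substitute the ansatz $r_k=a\cosh(\theta_0+k\delta)$, $h_k$ affine in $k$, into the discrete Euler--Lagrange recursion and check that the resulting constraints on $a,\theta_0,\delta$ are compatible with the neck normalization; uniqueness of the solution with prescribed neck data then forces every such profile polygon to be of this form.

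The step I expect to be the main obstacle is the semi-discrete one: selecting the right semi-discrete holomorphic data in Theorem \ref{semi-discrete weierstrass} so that its output is genuinely a rotationally symmetric catenoid rather than some nearby surface, and then evaluating the representation formula explicitly enough to extract the meridian---here the ``integration'' is a hybrid of a difference equation in one variable and an ODE in the other, and keeping the rotational symmetry manifest throughout is delicate. A secondary difficulty is item (6) for $PR^{va}_{pd,rd}$ and $M^{va}_{pd,rs}$: a priori a minimizer of a discretized area functional need not have its vertices \emph{exactly} (rather than merely asymptotically) on a smooth cosh graph, so the crux is the honest verification that the relevant discrete Euler--Lagrange equation is solved exactly by a sampled hyperbolic cosine.
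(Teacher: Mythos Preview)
Your strategy---assemble explicit closed-form profile curves for all seven catenoids, then compare pairwise---is exactly the paper's approach; the body of the paper (Sections 3--8) carries out that assembly, culminating in Table \ref{cat-parametrizations}, after which items 1, 2, 4, 5, 6 are declared obvious by inspection and only item 3 receives a short argument by contradiction. Two places where the paper's route is more direct than what you propose: (i) the $MW^{in}$-catenoids are obtained in Sections 3--4 straight from Definition \ref{def-minimal} (compute the dual $x^*$ and impose $\|x^*\|=\text{const}$), not via the Weierstrass representation of Theorem \ref{semi-discrete weierstrass}---though the paper remarks this latter route works too, with $g=ce^{\alpha k+{\rm i}\beta t}$ or $g=ce^{\alpha t+{\rm i}\beta k}$; (ii) your ansatz-verification plan for item (6) is unnecessary, since every profile curve already appears in the table explicitly as $(\cosh(\cdot),0,n\ell)$ or $(\cosh(\cdot),0,t)$. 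Relatedly, two inaccuracies in your description: the heights $h_k$ for the discrete-profile catenoids are always purely affine in $k$ (there is no $\sinh$ term), and the ``affine scaling'' in item (6) means $r=\cosh(\alpha h)$ for a constant $\alpha$, with no additive shift $d$.
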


\section{Notation for semi-discrete surfaces}
To consider semi-discrete minimal surfaces from an integrable systems approach, we set some notations in this section.

Let $x=x(k,t)$ be a map from a domain in $\mathbb{Z} \times \mathbb{R}$ to $\mathbb{R}^3$ $(k \in \mathbb{Z},$ $t \in \mathbb{R})$. 
We call $x$ a {\em semi-discrete surface}. Set
\[
\partial x=\frac{\partial x}{\partial t}, \ \Delta x =x_1 -x, \ \partial \Delta x=\partial x_1 -\partial x,
\]
where $x_1=x(k+1,t)$. The following definitions can be found in \cite{MW2}, and are all naturally motivated by geometric properties found in 
previous works, such as \cite{BP1}, \cite{BP2}, \cite{BPW1}, \cite{H-J1}, \cite{H-JHP1}, \cite{Ho1}, \cite{MW1}, \cite{MW2}, \cite{Wa1}.

\begin{definition} Let $x$ be a semi-discrete surface.
\begin{itemize}
\item $x$ is {\em conjugate} if $\partial x$, $\Delta x$ and $\partial \Delta x$ are linearly dependent.
\item $x$ is {\em circular} if there exists a circle $\mathscr{C}$ passing 
through $x$ and $x_1$ that is tangent to $\partial x$, $\partial x_1$ there (for all $k,$ $t$).
\end{itemize}
\end{definition}

\begin{remark}\label{circular-plane}
If $x$ lies in $\mathbb{R}^2 \cong \mathbb{C}$, circularity is equivalent to the following condition: there exists a function $s \in \mathbb{R} \setminus \{ 0\}$ such that
\begin{equation}\label{circular-condition}
\Delta x ={\rm i}s\left( \frac{\partial x}{\| \partial x \|}+\frac{\partial x_1}{\| \partial x_1 \|}\right) .
\end{equation}
\end{remark}

\begin{definition}\label{duality}
Suppose $x$, $x^*$ are conjugate semi-discrete surfaces. Then $x$ and $x^*$ are {\em dual} surfaces if there exists a function $\nu : \mathbb{Z} \times \mathbb{R} \rightarrow \mathbb{R}^{+}$ so that 
\[
\partial x^* =-\frac{1}{\nu ^2}\partial x, \ \Delta x^* =\frac{1}{\nu \nu_1}\Delta x.
\]
\end{definition}

\begin{definition}\label{semi-iso}
A circular semi-discrete surface $x$ is {\em isothermic} if there exist positive functions $\nu, \ \sigma, \ \tau$ such that
\[
\| \Delta x\| ^2 =\sigma \nu \nu_1,\ \| \partial x\| ^2 = \tau \nu^2 , \ {\rm with} \ \partial \sigma = \Delta \tau ={\rm 0}.
\]
\end{definition}

\begin{remark}
The $\nu$, $\nu_1$ in Definitions \ref{duality} and \ref{semi-iso} are the same, by the proof of Theorem 11 in \cite{MW2}.  
\end{remark}

\begin{definition}\label{def-minimal}
A semi-discrete isothermic surface $x$ is {\em minimal} if $x^*$ is inscribed in a sphere.
\end{definition}

\section{Semi-discrete catenoids with discrete profile curve} 

Take the following parametrization for $MW_{pd,rs}^{in}$-catenoids:
\[
x(k,t)=\begin{pmatrix}f(k){\rm cos}t \\ f(k){\rm sin}t \\ h\cdot k \end{pmatrix}
\]
where $f=f(k)$ and $h$ are positive. Then, with $f_1=f(k+1)$,
\[
\| \Delta x\| ^2 =(f_1-f)^2 +h^2,
\]
\[
\| \partial x\| ^2 = f^2.
\]
One can check that $x$ is isothermic by taking 
\[
\nu =f, \ \ \tau =1 \ \ {\rm and \ \ } \sigma=\frac{(\Delta f)^2 +h^2}{f \cdot f_1}. 
\]
We compute $x^*$ by solving
\[
x^* =-\frac{1}{\nu^2}\int \partial x dt
\]
\[
=\begin{pmatrix} -\frac{1}{f}{\rm cos}t\\ -\frac{1}{f}{\rm sin}t\\ 0 \end{pmatrix}+ \overrightarrow{c}_k,
\]
where $\overrightarrow{c}_k$ depends on $k$ but not $t$. We now have
\[
\Delta x^* =\frac{1}{f\cdot f_1}\begin{pmatrix} \Delta f \cdot {\rm cos}t\\ \Delta f \cdot {\rm sin}t\\ h \end{pmatrix}
\]
\[
=\frac{1}{f\cdot f_1}\begin{pmatrix} \Delta f \cdot {\rm cos}t\\ \Delta f \cdot {\rm sin}t\\ 0 \end{pmatrix}+(\overrightarrow{c}_{k+1}-\overrightarrow{c}_k)
\]
\[
\Leftrightarrow \overrightarrow{c}_{k+1}-\overrightarrow{c}_k =\begin{pmatrix}0 \\ 0 \\ \frac{h}{f\cdot f_1} \end{pmatrix}.
\]
So without loss of generality, we can take $\overrightarrow{c}_k$ as
\[
\overrightarrow{c}_k=\begin{pmatrix}0 \\ 0 \\ c(k) \end{pmatrix}.
\]
For $x$ to be minimal, we wish to have
\[
\| x^* \| \equiv {\rm constant}
\]
for some choice of $c(0)$. We obtain the following system of difference equations:
\begin{eqnarray}
f(k+1)=h c(k)f(k)+\sqrt{(h c(k)f(k))^2 +f(k)^2+h^2}, \label{MW-recursion1} \\
c(k+1)=c(k)+\frac{h}{f(k)f(k+1)} \label{MW-recursion2}
\end{eqnarray}
with initial conditions $f(0)$ and $c(0)$. (Equation (\ref{MW-recursion1}) follows from substituting Equation (\ref{MW-recursion2}) into the equation given by 
$\| x_1 \| ^2 =\| x \| ^2$.) Then we can solve recursively for $f(k)$ and $c(k)$. See Figure \ref{MW-catenoid}.

\begin{figure}
 \begin{center}
  
 \includegraphics[width=60mm]{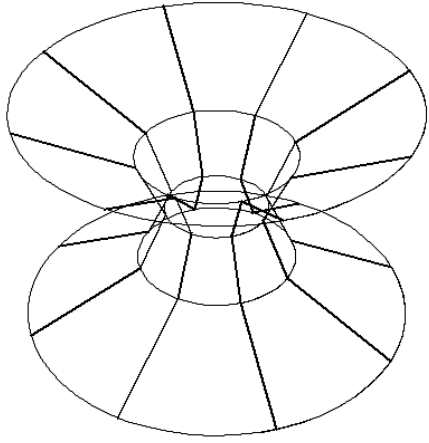}
  
 \end{center}
 \caption{a semi-discrete $MW^{in}_{pd,rs}$-catenoid with discretized profile curve}
 \label{MW-catenoid}
\end{figure}

In order to compare the other catenoids with $MW^{in}_{pd,rs}$-catenoids, we wish to reduce the above system of difference equations to one equation.
\begin{lemma}\label{MW-const}
We have
\begin{equation}\label{MW-recursion}
f(k+2)=\frac{f(k+1)^2 + h^2}{f(k)},
\end{equation}
with initial conditions $f(0)$ and $f(1)$ determined by $f(0)$ and $c(0)$.
\end{lemma}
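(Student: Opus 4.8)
The plan is to eliminate the auxiliary sequence $c(k)$ from the system \eqref{MW-recursion1}--\eqref{MW-recursion2} and thereby obtain a second order recursion in $f$ alone. First I would solve \eqref{MW-recursion1} for $c(k)$: moving the term $h\,c(k)f(k)$ to the left-hand side and squaring — which loses no information, since by construction the radical equals $f(k+1)-h\,c(k)f(k)>0$ — the summand $(h\,c(k)f(k))^2$ cancels on the two sides and one is left with
\[
2h\,c(k)\,f(k)\,f(k+1)=f(k+1)^2-f(k)^2-h^2 ,
\]
that is, an explicit expression for $h\,c(k)$ in terms of $f(k)$ and $f(k+1)$. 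This is the crucial simplification.

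Next I would write the same identity with $k$ replaced by $k+1$, giving $h\,c(k+1)$ in terms of $f(k+1)$ and $f(k+2)$, and subtract, invoking \eqref{MW-recursion2} in the form $h\,c(k+1)-h\,c(k)=h^2/(f(k)f(k+1))$. Clearing denominators by multiplying through by the positive quantity $2f(k)f(k+1)f(k+2)$ produces a polynomial relation among $f(k),f(k+1),f(k+2)$ which, regrouped in powers of $f(k+2)$, is the quadratic
\[
f(k)\,f(k+2)^2+\bigl(f(k)^2-f(k+1)^2-h^2\bigr)\,f(k+2)-f(k)\bigl(f(k+1)^2+h^2\bigr)=0 .
\]
A direct substitution shows that $f(k+2)=\bigl(f(k+1)^2+h^2\bigr)/f(k)$ is a root; since the product of the two roots equals $-\bigl(f(k+1)^2+h^2\bigr)$, the complementary root is $-f(k)<0$. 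Because $f$ is assumed positive throughout, $f(k+2)$ must be the first root, which is exactly \eqref{MW-recursion}. Finally, for the claim about initial data, I would simply note that \eqref{MW-recursion1} with $k=0$ expresses $f(1)$ explicitly in terms of $f(0)$ and $c(0)$, so the second order recursion \eqref{MW-recursion} is launched from the data $f(0),f(1)$, both determined by the original pair $f(0),c(0)$.

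I do not expect a genuine obstacle here: the content is a careful but essentially mechanical elimination. The only points requiring attention are the sign bookkeeping — justifying that squaring \eqref{MW-recursion1} is reversible, and then discarding the spurious root $-f(k)$ of the quadratic in favour of the positive one — and checking that none of the quantities one divides by ($f(k)$, $f(k+1)$, $f(k+2)$) vanish, which holds by the standing positivity assumption on $f$ and $h$.
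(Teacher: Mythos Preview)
Your proposal is correct and follows essentially the same route as the paper's own proof: both square \eqref{MW-recursion1} to extract $2h\,c(k)f(k)f(k+1)=f(k+1)^2-f(k)^2-h^2$, combine this with its shift and with \eqref{MW-recursion2}, and arrive at the same quadratic in $f(k+2)$, selecting the positive root. Your treatment is in fact slightly more explicit than the paper's about why squaring is reversible and why the spurious root $-f(k)$ is discarded.
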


\begin{proof}
By Equation (\ref{MW-recursion1}),
\[
f(k+2)^2 -2h c(k+1) f(k+1)f(k+2)-f(k+1)^2 +h^2=0.
\]
Inserting Equation (\ref{MW-recursion2}) into the above equation, we have
\begin{equation} \label{rec-eq1}
\begin{split}
f(k)f(k+2)^2 -2h(c(k)f(k)f(k+1)+h)f(k+2) \\
-f(k)(f(k+1)^2+h^2)=0. 
\end{split}
\end{equation}
Again by Equation (\ref{MW-recursion1}),
\begin{equation}
c(k)f(k)f(k+1)=\frac{f(k+1)^2 -f(k)^2-h^2}{2h}. \label{rec-eq2}
\end{equation}
Inserting (\ref{rec-eq2}) into (\ref{rec-eq1}), and noting that $f(k)$ for all $k$,
\[
f(k+2)=\frac{f(k+1)^2 + h^2}{f(k)}.
\]
\end{proof}

Lemma \ref{MW-const} implies
\[
f(1)f(-1)=f(0)^2 +h^2,
\]
and then neck-vertex symmetry gives $f(1)=f(-1)$, and so
\[
f(1)^2=f(0)^2 +h^2.
\]
Then Equation (\ref{MW-recursion1}) implies
\[
\sqrt{h^2 +f(0)^2}=h c(0)f(0)+\sqrt{(h c(0)f(0))^2 +f(0)^2+h^2},
\]
implying $hc(0)f(0)=0$, and so $c(0)=0$. Without loss of generality, we can take $f(0)=1$, and then the solution to Equation (\ref{MW-recursion}) 
is
\[
f(k)=\cosh ({\rm arcsinh}(h) \cdot k).
\]

\section{Semi-discrete catenoids foliated by discrete circles}
Take the following parametrization for $MW_{ps,rd}^{in}$-catenoids:
\[
x(k,t)=\begin{pmatrix}f(t){\rm cos}\alpha k\\ f(t){\rm sin}\alpha k\\ t \end{pmatrix},
\]
where $f(t)$ and $\alpha$ are positive. We assume
\begin{equation}\label{initial-condition}
f(0)=1 \ {\rm and \ } f^{\prime}(0)=0.
\end{equation}
Then
\[
\| \Delta x\| ^2 =4f(t)^2{\rm sin}^2\frac{\alpha}{2},
\]
\[
\| \partial x\| ^2 = (f^{\prime}(t))^2 +1.
\]
One can confirm that $x$ is isothermic by taking 
\[
\nu=f(t), \ \ \tau =\frac{(f^{\prime}(t))^2 +1}{(f(t))^2} \ \ {\rm and \ \ } \sigma =4\sin ^2\frac{\alpha}{2}. 
\]
Now,
\[
x^* =-\int \frac{1}{\nu^2}\partial x dt
\]
\[
=\begin{pmatrix}-{\rm cos}\alpha k \int \frac{f^{\prime}}{f^2}dt\\ -{\rm sin}\alpha k \int \frac{f^{\prime}}{f^2}dt\\ - \int \frac{1}{f^2}dt \end{pmatrix}
\]
\begin{equation}\label{result1}
=\begin{pmatrix}\frac{{\rm cos}\alpha k}{f}\\ \frac{{\rm sin}\alpha k}{f}\\ \ell (t) \end{pmatrix}+ \overrightarrow{c}_k,
\end{equation}
where $\overrightarrow{c}_k$ depends on $k$ but not $t$, and $\ell (t)=\int_{0}^{t} \frac{1}{(f(\tilde{t} ))^2}d\tilde{t}$ depends on $t$ but not $k$. We compute that (line (\ref{result2}) follows from the definition of $x^*$ and line (\ref{result3}) follows from (\ref{result1}))
\begin{equation}\label{result2}
\Delta x^* =\frac{1}{f}\begin{pmatrix}\Delta {\rm cos}\alpha k\\ \Delta {\rm sin}\alpha k\\ 0 \end{pmatrix}
\end{equation}
\begin{equation}\label{result3}
=\frac{1}{f}\begin{pmatrix} \Delta {\rm cos}\alpha k\\ \Delta {\rm sin}\alpha k \\ 0 \end{pmatrix}+(\overrightarrow{c}_{k+1}-\overrightarrow{c}_k)
\end{equation}
\[
\Leftrightarrow \overrightarrow{c}_{k+1}-\overrightarrow{c}_k=\begin{pmatrix}0 \\ 0 \\ 0 \end{pmatrix}.
\]
Thus $\overrightarrow{c}_k =\overrightarrow{c}_0$ for all $k$. For $x$ to be minimal, $x^*$ must be inscribed in a sphere and therefore we can choose $\overrightarrow{c}_0 =\begin{pmatrix} 0 \\ 0 \\ c_0\end{pmatrix}$ so that $\| x^* \|$ is constant. From (\ref{result1}) we have

\[
\frac{1}{f^2}+\left( \int_{0}^{t} \frac{1}{(f(\tilde{t} ))^2}d\tilde{t} +c_0 \right) ^2={\rm constant}
\]
\[
\Rightarrow \int_{0}^{t} \frac{1}{(f(\tilde{t} ))^2}d\tilde{t}=\frac{f^{\prime}}{f} -c_0
\]
\[
\Rightarrow f^{\prime \prime}f-f^{\prime}=1
\]
with initial value $f(0)$, and we find from (\ref{initial-condition}) that
\begin{center}
$f(t)=\cosh t$ (and $c_0=0$).
\end{center}

Thus semi-discrete catenoids with smooth profile curves and fixed $\alpha$ are unique up to homotheties. The 
picture in Figure \ref{MW'catenoid} is such a semi-discrete catenoid. In fact, we have proven:

\begin{proposition}
The profile curve of $MW^{in}_{ps,rd}$-catenoids is independent of choice of $\alpha$.
\end{proposition}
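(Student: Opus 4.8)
The plan is to trace through the construction of $MW^{in}_{ps,rd}$-catenoids and isolate exactly where the rotational parameter $\alpha$ enters, and to check that it never reaches the equation determining the profile function $f$. The parametrization $x(k,t) = (f(t)\cos\alpha k,\, f(t)\sin\alpha k,\, t)^t$ has $f$ a function of $t$ alone, and the profile curve is $t \mapsto (f(t), t)$; so it suffices to show that the minimality condition forces $f$ to satisfy an ODE with no $\alpha$ in it, together with $\alpha$-independent initial data.

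First I would record that $\| \partial x\|^2 = (f'(t))^2 + 1$ and that the isothermic data can be taken as $\nu = f(t)$, $\tau = ((f')^2+1)/f^2$, $\sigma = 4\sin^2(\alpha/2)$; only $\sigma$ sees $\alpha$, and since $\partial\sigma = 0$ and $\sigma$ does not enter the minimality condition (which is a statement about $x^*$), it plays no role in what follows. Next I would compute the dual surface $x^* = -\int \nu^{-2}\partial x\, dt$, obtaining first two coordinates $\cos\alpha k / f$ and $\sin\alpha k / f$ and third coordinate $\ell(t) = \int_0^t f(\tilde t)^{-2}\, d\tilde t$, plus a $k$-dependent integration vector $\overrightarrow{c}_k$. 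The key point is that the contribution of the first two coordinates to $\| x^*\|^2$ is $1/f^2$, which is independent of $\alpha$.

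Then I would use the duality relation $\Delta x^* = (\nu\nu_1)^{-1}\Delta x$ to pin down $\overrightarrow{c}_k$: comparing with the differentiated expression for $\Delta x^*$ shows $\overrightarrow{c}_{k+1} - \overrightarrow{c}_k = 0$, so $\overrightarrow{c}_k \equiv \overrightarrow{c}_0$, and minimality (that $x^*$ is inscribed in a sphere) lets us take $\overrightarrow{c}_0 = (0,0,c_0)^t$. Imposing $\| x^*\|^2 = \frac{1}{f^2} + (\ell(t)+c_0)^2 = \text{const}$ and differentiating in $t$ yields, after simplification, $f'' f - f' = 1$, in which $\alpha$ does not appear; combined with the normalization $f(0)=1$, $f'(0)=0$ this gives $f(t) = \cosh t$ (and $c_0 = 0$) regardless of $\alpha$. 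Hence the profile curve $t \mapsto (\cosh t, t)$ is the same for every $\alpha$, which is the claim.

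The main obstacle — really the only place where one must be careful — is the step determining $\overrightarrow{c}_k$: one has to check that $\Delta x^* = (\nu\nu_1)^{-1}\Delta x$ forces $\overrightarrow{c}_k$ to be $k$-constant and that the inscribed-in-a-sphere condition genuinely restricts it to a vertical vector, since a priori $\overrightarrow{c}_0$ could be an arbitrary vector in $\mathbb{R}^3$; it is the rotational symmetry of the first two coordinates of $x^*$ that kills the horizontal part. Everything else is a routine integration and a one-line ODE, none of which involves $\alpha$.
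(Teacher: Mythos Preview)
Your approach is essentially identical to the paper's own argument in Section~4: compute the isothermic data, integrate to obtain $x^*$, use the duality relation on $\Delta x^*$ to force $\overrightarrow{c}_k$ to be constant, impose the inscribed-in-a-sphere condition, and extract an $\alpha$-free ODE for $f$ solved by $\cosh t$. One small slip (which the paper itself shares): the ODE should read $f''f-(f')^2=1$, not $f''f-f'=1$; the claimed solution $f(t)=\cosh t$ satisfies the former but not the latter.
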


\begin{figure}
 \begin{center}

 \includegraphics[width=60mm]{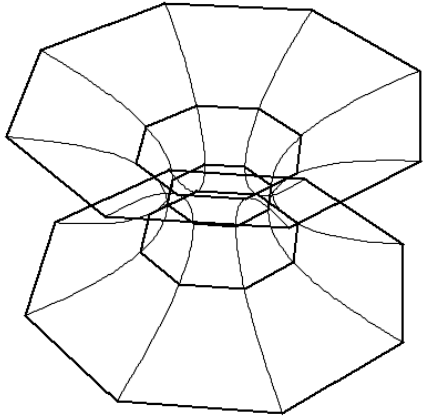}
  
 \end{center}
 \caption{a semi-discrete $MW^{in}_{ps,rd}$-catenoid discretized in the direction of rotation}
 \label{MW'catenoid}
\end{figure}

\section{Weierstrass representation for semi-discrete minimal surfaces}

We now give a Weierstrass representation for semi-discrete minimal surfaces. First we define semi-discrete holomorphic functions.
\begin{definition}
A semi-discrete isothermic surface $g$ is a {\em semi-discrete holomorphic function} if $g:\mathbb{Z} \times \mathbb{R}$ lies in a plane.
\end{definition}

\begin{remark}
Semi-discrete holomorphic maps have the following property: With $\sigma$ and $\tau$ as in Definition \ref{semi-iso} (with $x$ replaced by $g$),
\begin{equation}\label{property}
\frac{\| \Delta g \| ^2}{\| g^{\prime} \| \ \| g_1^{\prime} \|}=\frac{\sigma}{\tau},
\end{equation}
where $g^{\prime}=\partial g$. So we can think of $\tau$ and $\sigma$ as the (absolute values of the) semi-discrete cross-ratio factorizing functions, in analogy to the fully 
discrete case in \cite{BP1}, \cite{BP2}, \cite{H-J1}, \cite{H-JHP1}, \cite{Ho1}, \cite{Ro1}.
\end{remark}

We introduce the following recipe for constructing semi-discrete minimal surfaces.
\begin{theorem}[Weierstrass representation] \label{semi-discrete weierstrass}
We can construct a semi-discrete minimal surface from a semi-discrete holomorphic function $g$ by solving
\begin{equation}\label{weierstrass-formula}
\partial x =-\frac{\tau}{2} {\rm Re} \begin{pmatrix} \frac{1-g^2}{g^{\prime}}\\ \frac{{\rm i}(1+g^2)}{g^{\prime}} \\ \frac{2g}{g^{\prime}} \end{pmatrix}, \ \
\Delta x =\frac{\sigma}{2} {\rm Re} \begin{pmatrix} \frac{1-g g_1}{\Delta g}\\ \frac{{\rm i}(1+g g_1)}{\Delta g}\\ \frac{g+g_1}{\Delta g} \end{pmatrix}
\end{equation}
with $\tau$ and $\sigma$ as in Definition \ref{semi-iso}.
Conversely, any semi-discrete minimal surface is described in this way by some semi-discrete holomorphic function $g$.
\end{theorem}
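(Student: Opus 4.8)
The plan is to verify that the surface $x$ defined by the pair of equations \eqref{weierstrass-formula} is indeed semi-discrete minimal, by producing explicitly the dual surface $x^*$ inscribed in a sphere, and then conversely to recover $g$ from any given semi-discrete minimal $x$ via its Christoffel dual and stereographic projection. For the forward direction, I would first check that the prescribed $\partial x$ and $\Delta x$ are compatible, i.e.\ that $\partial \Delta x = \partial x_1 - \partial x$ holds as an integrability condition; this should follow from the isothermic relations in Definition~\ref{semi-iso} (namely $\partial\sigma = \Delta\tau = 0$) together with the holomorphicity of $g$, which via \eqref{property} ties $\sigma/\tau$ to $\|\Delta g\|^2/(\|g'\|\,\|g_1'\|)$. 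The natural candidate for the dual surface is the stereographic-type lift
\[
x^* = \operatorname{Re}\begin{pmatrix} g \\ -\mathrm{i} g \\ 1 \end{pmatrix}\Big/\,(\text{suitable real normalization}),
\]
more precisely the point on the unit sphere $S^2$ obtained from $g$ by inverse stereographic projection; one checks $\|x^*\|\equiv 1$ by construction, so once duality is established, Definition~\ref{def-minimal} gives minimality immediately.

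The core computation is to confirm the duality relations $\partial x^* = -\nu^{-2}\partial x$ and $\Delta x^* = (\nu\nu_1)^{-1}\Delta x$ with $\nu$ as in Definition~\ref{semi-iso}. Writing $N = N(g)$ for the inverse stereographic projection of $g$ onto $S^2$, a direct differentiation gives $\partial N$ as an explicit rational expression in $g, \bar g, g'$, and one verifies that $\operatorname{Re}$ of the Weierstrass vector $(\tfrac{1-g^2}{g'}, \tfrac{\mathrm{i}(1+g^2)}{g'}, \tfrac{2g}{g'})$ is, up to the factor $\tau$ and the sign, exactly $\|g'\|^2/(\text{denominator})$ times $\partial N$; matching this with $\|\partial x\|^2 = \tau\nu^2$ from Definition~\ref{semi-iso} forces $\nu = \|g'\|/(1+|g|^2)$ (up to scale), i.e.\ $\nu$ is essentially the conformal factor of the Gauss map. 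The discrete relation is the analogous but algebraic identity: $\Delta N = N_1 - N$ expressed through $g$ and $g_1$ should equal $\|\Delta g\|^2/(\text{denominators})$ times $\operatorname{Re}(\tfrac{1-gg_1}{\Delta g},\dots)$, and here one uses the $\Delta$-version of \eqref{property} together with $\|\Delta x\|^2 = \sigma\nu\nu_1$ to pin down the same $\nu$. I would also need to check that $x$ so constructed is itself circular and isothermic (not merely conjugate), which again reduces to the defining identities for $\sigma,\tau$ and the circularity of the holomorphic function $g$ via Remark~\ref{circular-plane}.

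For the converse, given a semi-discrete minimal $x$ with dual $x^*$ inscribed in a sphere (which we may normalize to the unit sphere centered at the origin), I would define $g$ to be the stereographic projection of $x^*$ from a chosen pole. Since $x^*$ is isothermic (being the Christoffel dual of the isothermic surface $x$, with the same factorizing functions $\sigma,\tau$ by the Remark following Definition~\ref{semi-iso}), and since stereographic projection is conformal, $g$ lands in a plane and inherits the isothermic structure, hence is a semi-discrete holomorphic function. One then runs the forward computation in reverse: the duality equations $\partial x = -\nu^2 \partial x^*$ and $\Delta x = \nu\nu_1 \Delta x^*$, combined with the explicit form of $\partial x^*$, $\Delta x^*$ in terms of $g$ obtained above, yield precisely \eqref{weierstrass-formula}.

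The main obstacle I anticipate is the discrete (difference) half of the duality identity: unlike the $t$-derivative part, which is a pointwise conformal-geometry computation essentially identical to the smooth Weierstrass representation, the $\Delta$-part is a genuinely algebraic identity relating $N(g_1) - N(g)$ to $\operatorname{Re}(\tfrac{1-gg_1}{\Delta g},\dots)$, and getting the normalization $\nu\nu_1$ (rather than, say, $\sqrt{\nu\nu_1}$ or some other combination) to come out correctly requires carefully using that the \emph{same} function $\nu$ serves both the $\partial$ and the $\Delta$ relations — this is exactly the content of the Remark after Definition~\ref{semi-iso}, and the proof of Theorem~11 in \cite{MW2} that it cites will likely be needed as an input rather than re-derived. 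A secondary subtlety is checking that the resulting $x$ is circular in the sense of the first Definition (tangency of a common circle to $\partial x$ and $\partial x_1$), which I would handle by reducing, via the Gauss map, to the planar circularity criterion \eqref{circular-condition} applied to $g$.
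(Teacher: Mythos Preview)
Your overall architecture matches the paper's: define $x^*$ as the inverse stereographic image of $g$, verify the Christoffel duality relations with the correct $\nu^*$ (the paper takes $\nu^* = 2\nu/(1+|g|^2)$), check compatibility of the two equations in \eqref{weierstrass-formula}, check circularity of $x$, and run the converse by stereographically projecting $x^*$. So the strategy is the same.

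Where your proposal diverges from the paper is in the assessment of difficulty and in the circularity step. You flag the discrete duality identity $\Delta x^* = (\nu^*\nu_1^*)^{-1}\Delta x$ as the ``main obstacle,'' but in the paper this is a direct algebraic computation of a few lines; the genuine work lies in (i) the compatibility check $\partial(\Delta x)=\Delta(\partial x)$, which the paper carries out explicitly using both the cross-ratio relation \eqref{property} and the planar circularity \eqref{circular-condition} of $g$ (not merely the isothermic relations), and (ii) the circularity of $x$. For (ii), your plan to ``reduce via the Gauss map to the planar criterion \eqref{circular-condition} applied to $g$'' is not what the paper does and, as stated, is too vague to go through: circularity of $x$ is a statement in $\mathbb{R}^3$, and one must first get $\partial x,\partial x_1,\Delta x$ into a common plane. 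The paper handles this with a separate lemma showing that an $\mathrm{SU}_2$ M\"obius transformation $g\mapsto (pg+q)/(-\bar q g+\bar p)$ corresponds to an $\mathrm{SO}_3$ rotation of $x$; this lets one rotate a given edge so that $\mathrm{span}\{\partial x,\partial x_1,\Delta x\}=\mathbb{C}\times\{0\}$, after which the planar criterion can be checked by an explicit polar-coordinate computation (which in turn requires a sign lemma, $\theta'\theta_1'<0$, imported from \cite{MW2}). You should expect to need an analogous normalization device; without it the circularity step is a genuine gap in your outline.
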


\begin{Proof}
We start by proving the first sentence of Theorem \ref{semi-discrete weierstrass}.
\[
x^* :=\begin{pmatrix}\frac{2g}{1+\| g\| ^2}\\ \frac{-1+\| g\| ^2}{1+\| g\| ^2}\end{pmatrix} \in \mathbb{S}^2 \subset \mathbb{C} \times \mathbb{R}=\mathbb{R}^3
\]
is semi-discrete isothermic, because $x^*$ is the image of $g$ under the inverse of 
stereographic projection. Then
\[
\partial x^* =\frac{2}{(1+\| g \| ^2)^2}\begin{pmatrix} \ g^{\prime}-\overline{g^{\prime}}g^2 \\ g^{\prime}\bar{g}+\overline{g^{\prime}}g \end{pmatrix},
\]
\[
\Delta x^* =\frac{2}{(1+\| g \| ^2)(1+\| g_1 \| ^2)}\begin{pmatrix} \Delta g-\overline{\Delta g}g g_1\\ \Delta g\bar{g_1}+\overline{\Delta g}g\end{pmatrix}.
\]
It follows that
\[
\| \partial x^* \| ^2 =\frac{4\| g^{\prime} \|^2}{(1+\| g \| ^2)^2}=\frac{4 \tau \nu^2}{(1+\| g \| ^2)^2},
\]
\[
\| \Delta x^* \| ^2 =\frac{4\| \Delta g \|^2}{(1+\| g \| ^2)(1+\| g_1 \| ^2)}=\frac{4\sigma \nu \nu_1}{(1+\| g \| ^2)(1+\| g_1 \| ^2)},
\]
so we can take the data $\tau^*$, $\sigma^*$, $\nu^*$ for the isothermic surface $x^*$ to be 
\[
\tau^*=\tau , \  \sigma^*=\sigma , \ \nu^* =\frac{2 \nu}{1+\| g \|^2}.
\]
$\sigma^*$ depends only on $k$, and $\tau^*$ depends only on $t$. Then

\[
\frac{-1}{(\nu^*) ^2}\partial x^*= \frac{-1}{2\nu^2}\begin{pmatrix} g^{\prime}-\bar{g^{\prime}}g^2\\ g^{\prime}\bar{g}+\bar{g^{\prime}}g \end{pmatrix}
= \frac{-\tau}{2\| g^{\prime}\|^2}\begin{pmatrix} g^{\prime}-\bar{g^{\prime}}g^2\\ g^{\prime}\bar{g}+\bar{g^{\prime}}g\end{pmatrix}
\]
\[
= -\frac{\tau}{2} \begin{pmatrix} {\rm Re}\left( \frac{1-g^2}{g^{\prime}} \right) +{\rm i Re}\left( \frac{{\rm i}(1+g^2)}{g^{\prime}} \right)\\ {\rm Re}\left( \frac{2g}{g^{\prime}} \right) \end{pmatrix}=\partial x,
\]
where we have identified $\mathbb{C} \times \mathbb{R}$ and $\mathbb{R}^3$ in the final equality, and similarly,
\[
\frac{1}{\nu^* \nu_1^*}\Delta x^{*}= \frac{1}{2 \nu \nu_1}\begin{pmatrix} \Delta g-\overline{\Delta g}g g_1\\ \Delta g\overline{g_1}+\overline{\Delta g}g \end{pmatrix}
\]
\[
= \frac{\sigma}{2\| \Delta g \|^2}\begin{pmatrix} \Delta g-\overline{\Delta g}g g_1\\ \Delta g\overline{g_1}+\overline{\Delta g}g \end{pmatrix}
\]
\[
= \frac{\sigma}{2} \begin{pmatrix} {\rm Re} \left( \frac{1-g g_1}{\Delta g} \right) + {\rm iRe}\left( \frac{{\rm i}(1+g g_1)}{\Delta g} \right) \\ {\rm Re}\left( \frac{g+g_1}{\Delta g} \right) \end{pmatrix} =\Delta x.
\]
Thus if $x$ exists solving (\ref{weierstrass-formula}), $x$ and $x^*$ are dual to each other. A direct computation shows
\[
\| \Delta x \| ^2 =\sigma \left( \frac{1+\| g\| ^2}{2 \nu } \right) \left( \frac{1+\| g_1\| ^2}{2 \nu _1} \right) ,
\]
\[
\| \partial x\| ^2 =\tau \left( \frac{1+\| g\| ^2}{2 \nu } \right)^2 ,
\]
so $x$ would be isothermic if it were circular. Since $x^*$ is inscribed in a sphere, $x$ would then be a semi-discrete minimal surface. Thus it remains to check existence and circularity of $x$.

To show existence of $x$, we need to show compatibility of the two equations in (\ref{weierstrass-formula}), and this amounts to showing that the two operators $\Delta$ and $\partial$ in (\ref{weierstrass-formula}) commute, that is,

\begin{equation}\label{compatibility}
\partial \left( \frac{\sigma}{2} {\rm Re} \begin{pmatrix} \frac{1-gg_1}{\Delta g} \\ \frac{{\rm i}(1+gg_1)}{\Delta g} \\ \frac{g+g_1}{\Delta g} \end{pmatrix} \right) =\Delta \left( -\frac{\tau}{2}{\rm Re}\begin{pmatrix} \frac{1-g^2}{g^{\prime}} \\ \frac{{\rm i}(1+g^2)}{g^{\prime}} \\ \frac{2g}{g^{\prime}} \end{pmatrix}
\right) .
\end{equation}
One can compute (where circularity of $g$, i.e. (\ref{circular-condition}) applied to $g$ instead of $x$, is used toward the end, and (\ref{property}) is used 
toward the beginning)
\begin{eqnarray}
&& \text{Left-hand side of (\ref{compatibility})} \nonumber \\
&=&\frac{\sigma}{2}{\rm Re} \left( \frac{1}{(\Delta g)^2} \begin{pmatrix} g^2 g^{\prime}-g_1^{\prime}-g^{\prime}g_1^2+g^{\prime} \\ {\rm i}(g^{\prime}g_1^2+g^{\prime}-g^2g_1^{\prime}-g_1^{\prime}) \\ 2g^{\prime}g_1-2gg_1^{\prime} \end{pmatrix} \right) \nonumber \\
&=&\frac{\tau \| \Delta g \|^2}{2\| g^{\prime}\| \| g_1^{\prime}\|}{\rm Re} \left( \frac{1}{(\Delta g)^2} \begin{pmatrix} g^2 g^{\prime}-g_1^{\prime}-g^{\prime}g_1^2+g^{\prime} \\ {\rm i}(g^{\prime}g_1^2+g^{\prime}-g^2g_1^{\prime}-g_1^{\prime}) \\ 2g^{\prime}g_1-2gg_1^{\prime} \end{pmatrix} \right) \nonumber \\
&=&\frac{\tau}{2} {\rm Re} \left( \frac{\overline{\Delta g}}{\| g^{\prime}\| \| g_1^{\prime} \| \Delta g}\begin{pmatrix} g^2 g^{\prime}-g_1^{\prime}-g^{\prime}g_1^2+g^{\prime} \\ {\rm i}(g^{\prime}g_1^2+g^{\prime}-g^2g_1^{\prime}-g_1^{\prime}) \\ 2g^{\prime}g_1-2gg_1^{\prime} \end{pmatrix} \right) \nonumber \\
&=&-\frac{\tau}{2}{\rm Re}\left( \frac{1}{g^{\prime} g_1^{\prime}}\begin{pmatrix}  g^2 g^{\prime}-g_1^{\prime}-g^{\prime}g_1^2+g^{\prime} \\ {\rm i}(g^{\prime}g_1^2+g^{\prime}-g^2g_1^{\prime}-g_1^{\prime}) \\ 2g^{\prime}g_1-2gg_1^{\prime} \end{pmatrix} \right). \nonumber \\
&=& \text{Right-hand side of (\ref{compatibility}).} \nonumber
\end{eqnarray}

The last task is to check that $x$ is circular. In order to prove this, we use the next lemma. We define $\hat{g}$ as
\[
\hat{g}:=\frac{pg+q}{-\bar{q}g+\bar{p}},\ 
{\rm for \ some\ \ } A(p,q)=\begin{pmatrix} p & q \\ -\bar{q} & \bar{p} \end{pmatrix} \in {\rm SU_2}/\{ \pm {\rm I} \}.
\]
$\hat{g}$ satisfies $|\hat{g}^{\prime}|^2=\tau \hat{\nu}^2 ,$ $|\Delta \hat{g}|=\sigma \hat{\nu} \hat{\nu}_1$, where $\hat{\nu}=\frac{\nu}{| -\bar{q}g+p |^2}$.

\begin{lemma}\label{rotation}
Let $\hat{x}$ be a semi-discrete isothermic surface satisfying (\ref{weierstrass-formula}), but with x replaced by $\hat{x}$ and g replaced by $\hat{g}$. Then there 
exists a matrix $A=A(p,q) \in {\rm SO_3}(\mathbb{R})$ so that
\begin{center}
$\partial \hat{x}=A \partial x$, $\Delta \hat{x}=A \Delta x$.
\end{center}
Therefore, x and $\hat{x}$ differ by only a rotation of $\mathbb{R}^3$. Furthermore, any $A \in {\rm SO_3}(\mathbb{R})$ can be obtaining with appropriate selection 
of p and q.
\end{lemma}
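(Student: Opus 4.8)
The plan is to connect the action of the Möbius transformation $g \mapsto \hat g$ on the Weierstrass data to a genuine rotation of $\mathbb{R}^3$, via the classical double cover $\mathrm{SU}_2 \to \mathrm{SO}_3(\mathbb{R})$. The right way to organize this is to work with the spinor/quaternionic form of the Weierstrass formula rather than the explicit $3$-vectors in \eqref{weierstrass-formula}. Concretely, I would note that each of the two columns in \eqref{weierstrass-formula}, before taking real parts, is the image under stereographic-type identification of a map built from $g$; the cleaner statement is that, writing $\Phi := -\tfrac{\tau}{2}\bigl(\tfrac{1-g^2}{g'},\, \tfrac{\mathrm{i}(1+g^2)}{g'},\, \tfrac{2g}{g'}\bigr)$ and $\Psi := \tfrac{\sigma}{2}\bigl(\tfrac{1-gg_1}{\Delta g},\, \tfrac{\mathrm{i}(1+gg_1)}{\Delta g},\, \tfrac{g+g_1}{\Delta g}\bigr)$, these are exactly the expressions whose real parts give $\partial x$ and $\Delta x$. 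The first step is therefore to record the algebraic identity: if $\hat g = (pg+q)/(-\bar q g + \bar p)$ with $|p|^2+|q|^2=1$, then the corresponding $\hat\Phi$ and $\hat\Psi$ (computed with $\hat\tau = \tau$, $\hat\sigma = \sigma$, which holds because $\hat g$ has the same $\tau,\sigma$ as noted just before the lemma) satisfy $\hat\Phi = A\,\Phi$ and $\hat\Psi = A\,\Psi$ for one and the same complex-linear-plus-conjugation map $A$, and that this $A$ is in fact a real rotation matrix depending only on $p,q$.

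The key computation is the first column. Set $w := \tfrac{1-g^2}{g'}$, etc.; the vector $(1-g^2,\mathrm{i}(1+g^2),2g)$ is the standard null-vector lift of $g \in \mathbb{CP}^1$, and its behavior under $g \mapsto \hat g$ is the textbook fact that $\mathrm{SU}_2$ acts on such null lifts through $\mathrm{SO}_3$. Precisely, one checks $(1-\hat g^2,\mathrm{i}(1+\hat g^2),2\hat g) = (-\bar q g + \bar p)^{-2}\, A(p,q)\,(1-g^2,\mathrm{i}(1+g^2),2g)$ with $A(p,q)\in\mathrm{SO}_3(\mathbb{R})$ the image of $A(p,q)$ under the spin cover — this is a finite, if slightly tedious, polynomial identity in $p,q,g$ that I would either cite or verify by expanding. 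Dividing by $\hat g' = (-\bar q g+\bar p)^{-2} g'$ cancels the scalar factor $(-\bar q g + \bar p)^{-2}$ exactly, so $\hat\Phi = A(p,q)\,\Phi$ on the nose, before taking real parts; since $A(p,q)$ is a real matrix it commutes with $\mathrm{Re}$, giving $\partial \hat x = A\,\partial x$. For the second column, the same cancellation works with the two-point version: $(1-\hat g\hat g_1,\mathrm{i}(1+\hat g\hat g_1),\hat g+\hat g_1)$ picks up the scalar $\bigl((-\bar q g+\bar p)(-\bar q g_1+\bar p)\bigr)^{-1}$ times the same matrix $A(p,q)$, and $\Delta \hat g$ carries precisely the reciprocal scalar, so $\hat\Psi = A(p,q)\,\Psi$ and hence $\Delta \hat x = A\,\Delta x$. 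The surjectivity claim is then just surjectivity of the spin cover $\mathrm{SU}_2 \to \mathrm{SO}_3(\mathbb{R})$.

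The main obstacle I anticipate is purely bookkeeping: getting the two-point (off-diagonal) null lift $(1-gg_1, \mathrm{i}(1+gg_1), g+g_1)$ to transform with exactly the same matrix $A(p,q)$ and exactly the reciprocal scalar factor as in the one-point case. This is the polarization of the one-point identity, and it must be checked that no extra symmetric correction term appears — i.e. that the bilinear form underlying the null cone is the one respected by $\mathrm{SO}_3$. Once the one-point identity is in hand, the two-point one should follow by replacing "$g^2$" with "$gg_1$" and "$2g$" with "$g+g_1$" systematically and re-running the same expansion, but this is where a sign or factor error is most likely to creep in, so I would carry it out carefully and, as a consistency check, confirm that setting $g_1 = g$ recovers the one-point identity. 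Everything else — that $\hat\tau=\tau$, $\hat\sigma=\sigma$, that $A$ is real and orthogonal, that $\mathrm{Re}$ commutes with $A$ — is immediate.
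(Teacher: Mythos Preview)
Your proposal is correct and takes a genuinely different route from the paper. The paper's proof is a bare direct computation: it writes $p=p_1+{\rm i}p_2$, $q=q_1+{\rm i}q_2$, expands both sides, and simply records the nine entries $a_{ij}$ of $A$ explicitly in terms of $p_1,p_2,q_1,q_2$, leaving the reader to verify that $A\in\mathrm{SO}_3(\mathbb{R})$ and that the two displayed matrix identities hold. Your approach instead explains \emph{why} the computation succeeds: you identify $(1-g^2,\mathrm{i}(1+g^2),2g)$ as the null lift of $g$, invoke the spin cover $\mathrm{SU}_2\to\mathrm{SO}_3(\mathbb{R})$ for its transformation law, and then observe that the Jacobian factors $\hat g'=(-\bar q g+\bar p)^{-2}g'$ and $\Delta\hat g=\bigl((-\bar q g+\bar p)(-\bar q g_1+\bar p)\bigr)^{-1}\Delta g$ cancel the scalar prefactors exactly; the two-point case is handled by polarization, and surjectivity is immediate from surjectivity of the cover. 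The paper's approach has the advantage of producing the explicit matrix entries, which can be checked mechanically; yours has the advantage of making the structure transparent, avoiding coordinate bookkeeping, and rendering both the orthogonality of $A$ and the surjectivity claim automatic rather than things to be verified after the fact.
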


We prove Lemma \ref{rotation} after this proof of Theorem \ref{semi-discrete weierstrass}. 

So without loss of generality, by Lemma \ref{rotation}, we can rotate and translate $x$ so that span$\{ \partial x,\partial x_1,\Delta x \} = \mathbb{C} \times \{ 0 \}$ 
for one edge $\overline{xx_1}$. Then
\begin{center}
$\frac{g}{g^{\prime}}$, $\frac{g_1}{g_1^{\prime}}$, $\frac{g+g_1}{\Delta g} \in $i$\mathbb{R}$.
\end{center}
Setting $g=re^{{\rm i}\theta}$ for $r=r(k,t) \geq 0$, $\theta =\theta (k,t) \in \mathbb{R}$, we have
\[
r^{\prime}=r_1^{\prime}=0, \ r_1e^{{\rm i} \theta_1}+re^{{\rm i} \theta}={\rm i} \rho (r_1e^{{\rm i} \theta_1}-re^{{\rm i} \theta})
\]
for some $\rho \in \mathbb{R}$. Taking the absolute value of
\[
r_1({\rm i}\rho -1)e^{{\rm i}\theta_1}=r({\rm i}\rho +1)e^{{\rm i}\theta},
\]
we find that $r=r_1$. By Remark \ref{circular-plane}, it suffices to show the existence of $s \in \mathbb{R}$ such that
\begin{center}
$\Delta x=$i$s \left( \frac{\partial x}{\| \partial x \|}+\frac{\partial x_1}{\| \partial x_1 \|} \right)$.
\end{center}
This is equivalent to showing
\begin{center}
$\arg \left( \Delta g -\overline{\Delta g}gg_1 \right) =\arg \left( \pm {\rm i} \left( \frac{g^{\prime}-\overline{g^{\prime}}g^2}{| g^{\prime}-\overline{g^{\prime}}g^2 |}+\frac{g_1^{\prime}-\overline{g_1^{\prime}}g_1^2}{| g_1^{\prime}-\overline{g_1^{\prime}}g_1^2 |} \right) \right)$,
\end{center}
which follows from
\[
\Delta g -\overline{\Delta g}gg_1=r(1+r^2)(e^{{\rm i}\theta_1}-e^{{\rm i}\theta}),
\]
\[
{\rm i} \left( \frac{g^{\prime}-\overline{g^{\prime}}g^2}{| g^{\prime}-\overline{g^{\prime}}g^2 |}+\frac{g_1^{\prime}-\overline{g_1^{\prime}}g_1^2}{| g_1^{\prime}-\overline{g_1^{\prime}}g_1^2 |} \right)
\]
\[
={\rm i} \left( \frac{{\rm i}\theta^{\prime}r(1+r^2)e^{\rm i \theta}}{| {\rm i} \theta^{\prime}r(1+r^2)e^{\rm i \theta} |}+
\frac{{\rm i}\theta_1^{\prime}r_1(1+r_1^2)e^{\rm i \theta _1}}{|{\rm i} \theta_1^{\prime}r_1(1+r_1^2)e^{\rm i \theta _1} |} \right)
\]
\[
=\pm (e^{{\rm i}\theta}-e^{{\rm i}\theta_1}),
\]
where we used the following lemma in the final equality above. This lemma follows from Lemma 6 and Theorem 11 in \cite{MW2}, because $g$ is isothermic.
\begin{lemma}
We have the following property:
\[
\theta^{\prime} \cdot \theta_1^{\prime}<0.
\]
\end{lemma}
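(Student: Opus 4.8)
The plan is to read the sign relation directly off the circularity of $g$, using the normalizations that are already in force at this point in the proof of Theorem~\ref{semi-discrete weierstrass}. Since $g$ is a semi-discrete holomorphic function, it is in particular a circular semi-discrete surface, so by Remark~\ref{circular-plane} there is some $s \in \mathbb{R}\setminus\{0\}$ with
\[
\Delta g = {\rm i}s\left( \frac{\partial g}{\| \partial g \|} + \frac{\partial g_1}{\| \partial g_1 \|} \right).
\]
Writing $g = re^{{\rm i}\theta}$ and $g_1 = r_1 e^{{\rm i}\theta_1}$, we have already shown that $r' = r_1' = 0$ and $r = r_1$, so $\partial g = {\rm i}r\theta' e^{{\rm i}\theta}$ and $\partial g_1 = {\rm i}r\theta_1' e^{{\rm i}\theta_1}$; moreover $r > 0$ and $\theta', \theta_1' \neq 0$, because the isothermic relation $\| \partial g \|^2 = \tau \nu^2 > 0$ forbids $\partial g = 0$. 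Hence $\partial g / \| \partial g \| = {\rm i}\,{\rm sign}(\theta')\, e^{{\rm i}\theta}$ and similarly for $g_1$, and the circularity condition becomes
\[
r\left( e^{{\rm i}\theta_1} - e^{{\rm i}\theta} \right) = -s\left( {\rm sign}(\theta')\, e^{{\rm i}\theta} + {\rm sign}(\theta_1')\, e^{{\rm i}\theta_1} \right).
\]

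I would then compare coefficients of $e^{{\rm i}\theta}$ and $e^{{\rm i}\theta_1}$. When these two unit complex numbers are linearly independent over $\mathbb{R}$, i.e. $\theta_1 - \theta \not\equiv 0 \pmod{\pi}$, the displayed identity forces $s\,{\rm sign}(\theta') = r > 0$ and $s\,{\rm sign}(\theta_1') = -r < 0$; dividing these gives ${\rm sign}(\theta') = -{\rm sign}(\theta_1')$, that is, $\theta'\cdot\theta_1' < 0$. The two degenerate alternatives reduce immediately to positivity of the isothermic data: $\theta_1 \equiv \theta \pmod{2\pi}$ would give $\Delta g = 0$, contradicting $\| \Delta g \|^2 = \sigma\nu\nu_1 > 0$; and if $\theta_1 = \theta + \pi$ the identity collapses to $s\left({\rm sign}(\theta') - {\rm sign}(\theta_1')\right) = 2r > 0$, which again forces ${\rm sign}(\theta') \neq {\rm sign}(\theta_1')$ and hence $\theta'\cdot\theta_1' < 0$. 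As the excerpt notes, the same conclusion is contained in Lemma~6 and Theorem~11 of \cite{MW2}, because $g$ is isothermic.

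There is no genuine obstacle here: once circularity is invoked and the earlier relations $r' = r_1' = 0$ and $r = r_1$ are substituted, the lemma is a two-line computation. The only point that deserves a moment's care is the legitimacy of the coefficient comparison — that $e^{{\rm i}\theta}$ and $e^{{\rm i}\theta_1}$ are not accidentally parallel or antiparallel — which is exactly why the two degenerate subcases above must be treated separately, but each is settled at once by the positivity of $\sigma$ and $\nu$.
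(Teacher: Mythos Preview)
Your argument is correct. The paper itself does not prove this lemma directly: it simply notes that the inequality follows from Lemma~6 and Theorem~11 of \cite{MW2}, using that $g$ is isothermic. Your proof, by contrast, is self-contained: you take the circularity condition \eqref{circular-condition} for $g$, substitute the already-established relations $r'=r_1'=0$ and $r=r_1$, and read off the sign relation from a linear-independence argument on $e^{{\rm i}\theta}$ and $e^{{\rm i}\theta_1}$, handling the two degenerate alignments separately via the positivity of $\sigma\nu\nu_1$. This has the advantage of keeping everything internal to the present paper's setup, at the cost of a short case analysis; the paper's citation is terser but opaque unless one has \cite{MW2} at hand.
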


We now prove the final sentence of Theorem \ref{semi-discrete weierstrass}. Let $x$ and be a semi-discrete minimal surface and $\psi$ be stereographic projection 
$\psi:\mathbb{S}^2 \rightarrow \mathbb{C}$. Then by definition, 
there exists a dual $x^*$ that is semi-discrete isothermic and inscribed in $\mathbb{S}^2$. Take
\[
g:=\psi \circ x^*,
\]
then $g$ is a semi-discrete holomorphic function (see Example 1 of \cite{MW2}). Setting
\[
x^*=(X_1,X_2,X_3), \ x^*_1=(X_{1,1},X_{2,1},X_{3,1}),
\]
we have
\[
g=\frac{X_1+{\rm i}X_2}{1-X_3}, \ X_1^2+X_2^2+X_3^2= X_{1,1}^2+X_{2,1}^2+X_{3,1}^2=1,
\]
\[
(X_1^{\prime})^2+(X_2^{\prime})^2+(X_3^{\prime})^2=\frac{\tau}{\nu ^2},
\]
\[
1-(X_1X_{1,1}+X_2X_{2,1}+X_3X_{3,1})=\frac{\sigma}{2\nu \nu_1}.
\]
Using the above equations and Definition \ref{duality}, computations give
\[
-\frac{\tau}{2} {\rm Re} \begin{pmatrix} \frac{1-g^2}{g^{\prime}}\\ \frac{{\rm i}(1+g^2)}{g^{\prime}} \\ \frac{2g}{g^{\prime}} \end{pmatrix}=
-\nu ^2\begin{pmatrix} X_1^{\prime} \\ X_2^{\prime} \\ X_3^{\prime} \end{pmatrix}=\partial x,
\]
\begin{equation}\label{x-described}
\frac{\sigma}{2} {\rm Re} \begin{pmatrix} \frac{1-g g_1}{\Delta g}\\ \frac{{\rm i}(1+g g_1)}{\Delta g}\\ \frac{g+g_1}{\Delta g} \end{pmatrix}=
\nu \nu_1 \begin{pmatrix} X_{1,1}-X_1 \\ X_{2,1}-X_2 \\ X_{3,1}-X_3 \end{pmatrix}=\Delta x.
\end{equation}

Thus $g$ produces $x$ via Equation (\ref{weierstrass-formula}), completing the proof.

Because the computation of (\ref{x-described}) in particular is rather laborious, we outline one part of that computation here: Since
\begin{eqnarray}
\frac{\sigma}{\Delta g}=\nu \nu_1 [ X_{1,1}(1-X_3)-X_1(1-X_{3,1}) \nonumber \\
-{\rm i}\{ X_{2,1}(1-X_3)-X_2(1-X_{3,1}) \} ], \nonumber
\end{eqnarray}
we have
\[
\frac{\sigma}{2}{\rm Re}\frac{1-gg_1}{\Delta g}=\frac{\nu \nu_1}{2(1-X_3)(1-X_{3,1})}{\rm Re}\left( \left[ X_{1,1}(1-X_3) \right. \right.
\]
\[
\left. -X_1(1-X_{3,1})-{\rm i}\{ X_{2,1}(1-X_3)-X_2(1-X_{3,1}) \} \right] \cdot
\]
\[
\left. [(1-X_3)(1-X_{3,1})-(X_1+{\rm i}X_2)(X_{1,1}+{\rm i}X_{2,1})] \right)
\]
\[
=\frac{\nu \nu_1}{2(1-X_3)(1-X_{3,1})} [\{ X_{1,1}(1-X_3)-X_1(1-X_{3,1})\} \cdot 
\]
\[
\{ (1-X_3)(1-X_{3,1}) -X_1 X_{1,1}+X_2 X_{2,1} \} -\{ X_{2,1}(1-X_3)
\]
\[
-X_2(1-X_{3,1}) \} \cdot (X_1 X_{2,1}+X_{1,1} X_2)]
\]
\[
=\frac{\nu \nu_1}{2(1-X_3)(1-X_{3,1})} \{ (1-X_3)(1-X_{3,1})(X_{1,1}-X_1
\]
\[
-X_{1,1} X_3+X_1 X_{3,1})-X_1(1-X_3)(X_{1,1}^2 +X_{2,1}^2)
\]
\[
+X_{1,1}(1-X_{3,1}) (X_1^2+X_2^2) \}
\]
\[
=\frac{\nu \nu_1}{2(1-X_3)(1-X_{3,1})} \{ (1-X_3)(1-X_{3,1})(X_{1,1}-X_1
\]
\[
-X_{1,1} X_3+X_1 X_{3,1})-(1-X_3)(1-X_{3,1})(1+X_{3,1})X_1
\]
\[
+(1-X_3)(1-X_{3,1})(1+X_3)X_{1,1} \}
\]
$=\nu \nu_1 (X_{1,1}-X_1)$.
\end{Proof}

We now give the proof of Lemma \ref{rotation}:

\begin{Proof}
From direct computation, one can check that
\begin{eqnarray}
-\frac{\tau}{2} {\rm Re} \begin{pmatrix} \frac{1-\hat{g}^2}{\hat{g}^{\prime}} \\ \frac{{\rm i}(1+\hat{g}^2)}{\hat{g}^{\prime}} \\ \frac{2\hat{g}}{\hat{g}^{\prime}}  \end{pmatrix}&=&
A\left( -\frac{\tau}{2}{\rm Re}\begin{pmatrix} \frac{1-g^2}{g^{\prime}} \\ \frac{{\rm i}(1+g^2)}{g^{\prime}} \\ \frac{2g}{g^{\prime}} \end{pmatrix} \right) ,  \nonumber\\
\frac{\sigma}{2} {\rm Re}\begin{pmatrix} \frac{1-\hat{g} \hat{g}_1}{\Delta \hat{g}} \\ \frac{{\rm i}(1+\hat{g} \hat{g}_1)}{\Delta \hat{g}} \\ \frac{\hat{g}+\hat{g}_1}{\Delta \hat{g}}  \end{pmatrix}&=&
A \left( \frac{\sigma}{2} {\rm Re}\begin{pmatrix} \frac{1-gg_1}{\Delta g} \\ \frac{{\rm i}(1+gg_1)}{\Delta g} \\ \frac{g+g_1}{\Delta g} \end{pmatrix} \right) , \nonumber
\end{eqnarray}
where 
\[
A=(a_{ij})_{i,j=1,2,3}
\]
with (we set $p=p_1 +{\rm i}p_2$, $q=q_1 +{\rm i}q_2$, $p_j \in \mathbb{R}$, $q_j \in \mathbb{R}$)
\[
a_{11}=p_1^2-p_2^2-q_1^2+q_2^2, \ a_{12}=-2p_1p_2-2q_1q_2,
\]
\[
a_{13}=-2p_1q_1 +2p_2q_2, \ a_{21}=2p_1p_2-2q_1q_2, 
\]
\[
a_{22}= p_1^2-p_2^2+q_1^2-q_2^2, \ a_{23}= -2p_1q_2-2p_2q_1,
\]
\[
a_{31}= 2p_1q_1+2p_2q_2, \ a_{32}= 2p_1q_2-2p_2q_1 
\]
\[
a_{33}=p_1^2+p_2^2-q_1^2-q_2^2.
\]
Lemma \ref{rotation} now follows.
\end{Proof}

\begin{example}
The semi-discrete minimal Enneper surface, has been given in \cite{MW2}. 
We can also obtain that surface by taking $g(k,t)=k+ {\rm i}t$ in Theorem \ref{semi-discrete weierstrass}.
\end{example}

\begin{example}
The $MW^{in}_{pd,rs}$ (resp. $MW^{in}_{ps,rd}$) catenoid can be constructed via Theorem \ref{semi-discrete weierstrass} with
\[
g(k,t)=c e^{\alpha k+{\rm i}\beta t} \ \ \ ({\rm resp.}\ \ g(k,t)=c e^{\alpha t+{\rm i} \beta k}),
\] 
for the right choices of $c, \ \alpha, \ \beta \in \mathbb{R} \setminus \{ 0\}$.
\end{example}

\section{Fully-discrete catenoids of Bobenko-Pinkall}

The fully discrete catenoids of Bobenko and Pinkall \cite{BP1} can be 
given explicitly by using the Weierstrass representation for discrete 
minimal surfaces (in the integrable systems sense), that is, we can use 
\begin{equation}
\label{BP-Wrep} x(q)-x(p) = \text{Re} \left( \frac{a_{pq}}{g_q-g_p} \begin{pmatrix}1-g_qg_p\\ \text{i}+\text{i}g_qg_p\\ g_q+g_p \end{pmatrix} \right)
\end{equation}
with the choice of $g$ as $g_p=g_{n,m}=c e^{c_1n+i c_2 m}$, where $c$, $c_1$, $c_2$ are nonzero real constants, and $p=(n,m)$ and $q=(n+1,m)$ or $q=(n,m+1)$, 
and $a_{pq}$ is a cross ratio factorizing function for $g$. This formulation can be found in \cite{BP1}, 
\cite{BP2}, \cite{H-J1}, \cite{Ho1}, \cite{Ro1}.

This choice of $g$ has cross ratios
\[
{\rm cr}(g_{n,m},g_{n+1,m},g_{n+1,m+1},g_{n,m+1})=\frac{-\sinh ^2 \frac{c_1}{2}}{\sin ^2 \frac{c_2}{2}}.
\]
So we can take $a_{pq}=-\alpha \sinh ^2 \frac{c_1}{2}$ (resp. $a_{pq}=\alpha \sin ^2\frac{c_2}{2}$), when $q=(n+1,m)$ (resp. $q=(n,m+1)$).  The value $\alpha \in \mathbb{R} \setminus \{ 0 \}$ can be chosen as we like.

Taking the axis of the surface to be
\[
\left\{ \begin{pmatrix} 0\\ 0\\ t \end{pmatrix} \Biggr| \ t \in \mathbb{R}  \right\} ,
\]
and taking the vertex in the profile curve at the neck to be $f(0,0)=(1,0,0)^t$, we can propagate to find the discrete profile curve in the $x_1x_3$-plane. For this 
purpose, $\alpha =-2$ and $c=-1$ are suitable values. One can check that, for all $m \in \mathbb{Z}$,
\[
x(0,m)=\begin{pmatrix}\cos (c_2 m)\\ \sin (c_2 m)\\0\end{pmatrix}.
\]

By (\ref{BP-Wrep}), the discrete profile curve in the $x_1x_3$-plane is, for all $n \in \mathbb{Z}$,
\[
x(n,0)=\begin{pmatrix} \cosh (c_1 n)\\ 0\\ n \cdot \sinh c_1 \end{pmatrix}.
\]
Again by (\ref{BP-Wrep}), we obtain
\[
x(n,m)=\begin{pmatrix} \cosh (c_1 n)\cos (c_2 m)\\ \cosh (c_1 n)\sin (c_2 m) \\ n \cdot \sinh c_1 \end{pmatrix}.
\]
Setting $\ell =\sinh c_1$, one profile curve of the $BP^{in}_{pd,rd}$-catenoid is as written in the upcoming Section \ref{proof of main theorem}. 
Note that the profile curves do not depend on $c_2$.

\begin{figure}
 \begin{center}

 \includegraphics[width=60mm]{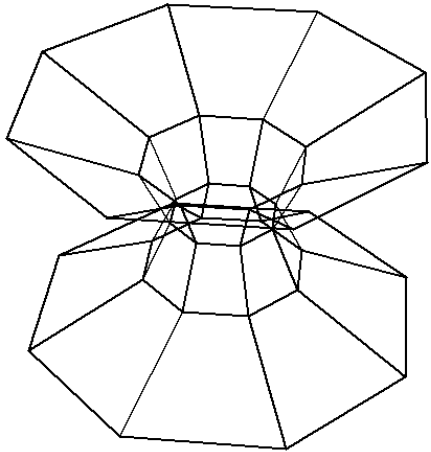}
  
 \end{center}
 \caption{a (fully-discrete) $BP^{in}_{pd,rd}$-catenoid}
 \label{BP-catenoid}
\end{figure} 

\section{Fully-discrete catenoids of Polthier-Rossman}

The catenoids described in \cite{PR1} are fully discrete and have discrete rotational 
symmetry, thus the symmetry group is a dihedral group.  Taking the
dihedral angle to be $\theta = 2 \pi K^{-1}$ for a constant $K \in \mathbb{N}$ and $K \geq 3$, 
the vertices of a profile curve (when the $x_3$-axis is the central 
axis of symmetry) in the $x_1 x_3$-plane can be taken to be points 
that are vertically equally spaced apart with height difference between 
adjacent vertices denoted as $\ell$, and the $x_1$ coordinates of the 
vertices can be taken as $x(n) = r \cdot \cosh (r^{-1} a \ell n)$, where 
$a = r \ell^{-1} {\rm arccosh} (1+r^{-2} \ell^2 (1+\cos \theta)^{-1})$.  
Here $r$ is the waist radius of the interpolated hyperbolic cosine curve. Taking $r=1$ without loss of generality, we can take one profile curve to be

\begin{figure}
 \begin{center}

 \includegraphics[width=60mm]{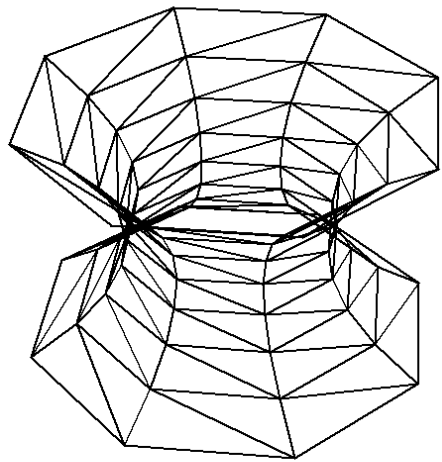}
  
 \end{center}
 \caption{a (fully-discrete) $PR^{va}_{pd,rd}$-catenoid}
 \label{PR-catenoid}
\end{figure}

\begin{equation}\label{PR-cat}
\begin{pmatrix} \cosh (n \cdot {\rm arccosh} (1+\ell^2 (1+\cos \theta)^{-1}))\\  
0\\ n \cdot \ell  \end{pmatrix} , \ n \in \mathbb{Z},
\end{equation}
so when we take the limit as $\theta \to 0$, we have 
\[
\begin{pmatrix}\cosh (n \cdot {\rm arccosh} (1+\tfrac{1}{2} \ell^2)) \\ 
0\\ n \cdot \ell \end{pmatrix}.
\] 
A direct computation, as in the proof of the next proposition, shows that this is exactly what was obtained 
by Machigashira \cite{Ma1}, although it was not described in terms of the 
hyperbolic cosine function there, but rather by using Chebyshev polynomials 
and Gauss hypergeometric functions.

\begin{proposition}\label{M-PR-limit}
The $M^{va}_{pd,rs}$-catenoid equals the limiting case of the $PR^{va}_{pd,rd}$-catenoids as 
$\theta \to 0$, and no $PR^{va}_{pd,rd}$-catenoid (with positive $\theta$) will 
ever have the same profile curve as the $M^{va}_{pd,rs}$-catenoid.  
\end{proposition}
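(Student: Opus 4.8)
The plan is to handle the two assertions of the proposition separately, using that the $PR^{va}_{pd,rd}$-catenoid profile curve is already given in closed form by \eqref{PR-cat}, so only the Machigashira side needs work.

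\emph{The limiting case.} Since $1+\cos\theta\to 2$ as $\theta\to 0$, the profile curve \eqref{PR-cat} converges vertex by vertex to the polygon with vertices $(\cosh(n\cdot{\rm arccosh}(1+\tfrac12\ell^2)),\,0,\,n\ell)$, $n\in\mathbb{Z}$. To recognize this as the $M^{va}_{pd,rs}$-catenoid I would reproduce Machigashira's variational setup from \cite{Ma1}: a semi-discrete catenoid discretized in the $u$-direction is a union of cone frusta joining coaxial circles of radii $f(n)>0$ at heights $z(n)$, and criticality of the total frustum area together with the neck-vertex reflection symmetry forces the heights to be equally spaced, $z(n)=n\ell$, and the radii $f(n)$ to satisfy an explicit second-order recursion. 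I would then substitute $f(n)=\cosh(n\cdot{\rm arccosh}(1+\tfrac12\ell^2))$, normalized by $f(0)=1$ and $f(1)=f(-1)$, and verify that it solves that recursion. The bridge to the description in \cite{Ma1} is the classical identity $\cosh(n\cdot{\rm arccosh}\,x)=T_n(x)={}_2F_1(-n,n;\tfrac12;\tfrac{1-x}{2})$, where $T_n$ is the $n$-th Chebyshev polynomial of the first kind; thus $f(n)=T_n(1+\tfrac12\ell^2)$, which is exactly Machigashira's expression in terms of Chebyshev polynomials and Gauss hypergeometric functions.

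\emph{Non-coincidence for $\theta>0$.} After an appropriate homothety we may assume both profile curves have waist radius $1$. Then by \eqref{PR-cat} the $PR^{va}_{pd,rd}$-catenoid profile curve has vertices $(\cosh(n a_\theta),0,n\ell)$ with $a_\theta={\rm arccosh}(1+\ell^2(1+\cos\theta)^{-1})$, while the $M^{va}_{pd,rs}$-catenoid profile curve has vertices $(\cosh(n a_0),0,n\ell')$ with $a_0={\rm arccosh}(1+\tfrac12\ell'^2)$. In each polygon the point $(1,0,0)$ is the unique vertex of minimal distance to the axis, and both coordinates of the remaining vertices are strictly monotone as one moves away from it; hence equality of the two vertex sets forces $\ell=\ell'$ and $a_\theta=a_0$, i.e. $\ell^2(1+\cos\theta)^{-1}=\tfrac12\ell^2$, i.e. $\cos\theta=1$. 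Since $\theta=2\pi K^{-1}$ with $K\ge 3$ gives $1+\cos\theta\in[\tfrac12,2)$, this never happens; in fact $a_\theta>a_0$ for every admissible $\theta$, so the two profile curves are never the same.

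\emph{Main obstacle.} The limit computation and the non-coincidence argument are essentially immediate once \eqref{PR-cat} is in hand. The only genuinely non-routine step is identifying the $\theta\to 0$ limit with the $M^{va}_{pd,rs}$-catenoid: this requires recovering Machigashira's Euler--Lagrange recursion for the $u$-discretized catenoid and checking that the closed-form hyperbolic cosine (equivalently Chebyshev) expression solves it under the neck-vertex normalization, which is the ``direct computation'' alluded to before the proposition.
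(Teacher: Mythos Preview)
Your proposal is correct and follows essentially the same route as the paper: both hinge on the Chebyshev identity $\cosh(n\cdot{\rm arccosh}\,x)=T_n(x)$ to match the $\theta\to 0$ limit of \eqref{PR-cat} with Machigashira's profile, and both compare the $n=1$ vertices to force $\cos\theta=1$ for coincidence. The only difference is that the paper simply quotes the Chebyshev form of the $M^{va}_{pd,rs}$-catenoid from \cite{Ma1} rather than rederiving the variational recursion, so your plan to reproduce Machigashira's Euler--Lagrange setup is more work than the paper actually does.
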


\begin{Proof}
The vertices of an $M^{va}_{pd,rs}$-catenoid profile curve can be written as 
\begin{equation}\label{M-cat}
\begin{pmatrix} T_n(1+\frac{1}{2}\Lambda ^2)\\ 0\\ n \cdot \Lambda \end{pmatrix} \ ,
\end{equation}
where $T_k$ can be defined by the recursion
\[
T_0(z)=1, \ T_1(z)=z, \ T_n(z)=2zT_{n-1}(z)-T_{n-2}(z)
\]
for $n \geq 2$. The $T_n$ are called Chebyshev polynomials of the first kind, and are described in \cite{Ma1}. Suppose, for 
the vertex on the profile curve where $n=1$, we equate (\ref{PR-cat}) and (\ref{M-cat}), i.e.
\begin{equation}\label{PR-M-comparison}
\begin{pmatrix} 1+\tfrac{1}{2}\Lambda^2 \\ 0\\ \Lambda \end{pmatrix} = 
\begin{pmatrix} 1+\ell^2 (1+\cos \theta)^{-1}\\ 0\\ \ell \end{pmatrix} \; .
\end{equation}

The third coordinate in (\ref{PR-M-comparison}) implies $\Lambda = \ell$, and then the 
first coordinate implies $\theta=0$.  Then we would need to check that all other corresponding 
vertices in (\ref{PR-cat}) and (\ref{M-cat}) also become equal, to have proven the proposition.  

Letting $x$ denote the first coordinate of the profile curve, the 
$M^{va}_{pd,rs}$-catenoid satisfies
\[ x(n) = T_n (1+\tfrac{1}{2} \Lambda^2)=2 (1+\tfrac{1}{2} \Lambda^2) 
\cdot T_{n-1} (1+\tfrac{1}{2} \Lambda^2)
\]
\[ - 
T_{n-2}(1+\tfrac{1}{2} \Lambda^2). 
\]

For the limiting $PR^{va}_{pd,rd}$-catenoid ($\theta=0$), we would like to see the same recursion for the first 
coordinate of the profile curve, i.e. we wish to have
\[
\cosh (n \cdot {\rm arccosh}(1+\tfrac{1}{2} \ell^2)) = 
\]
\[
2 (1+\tfrac{1}{2} \ell^2) \cosh ( (n-1) \cdot {\rm arccosh} (1+\tfrac{1}{2} \ell^2)) 
\]
\[
- \cosh ((n-2) \cdot {\rm arccosh} (1+\tfrac{1}{2} \ell^2)) \; , 
\] and this is indeed true, proving the proposition. 
\end{Proof}

\section{Another type of semi-discrete catenoid}

Consider the two discrete loops, for a constant $K \in \mathbb{N}$, $K \geq 3$,
\[
\begin{pmatrix} \cos (2 \pi K^{-1})\\ \sin (2 \pi K^{-1})\\ \pm r\end{pmatrix}
\]
in the horizontal planes at height $\pm r$. We consider a 
semi-discrete catenoid (i.e. a surface with rotational symmetry by angle $2 \pi K^{-1}$ about the $x_3$-axis) 
with those two loops as boundary.  
This catenoid is comprised of $K$ congruent pieces, 
each piece foliated by horizontal line segments.  One such 
piece would have two boundary curves parametrized by 
\begin{equation}\label{neweqn20} 
c_1(t) = \begin{pmatrix} x(t)\\ 0\\ t\end{pmatrix} {\rm \ and \ }c_2(t) = \begin{pmatrix}x(t) \cos (2 \pi K^{-1})\\ x(t) \sin (2 \pi K^{-1})\\ t\end{pmatrix} 
\end{equation}
in vertical planes, with $t \in [-r,r]$ and with 
\[
x(r)=x(-r)=1.  
\]
The area of this piece is 
\[
A = \int_{-r}^r x \cdot \sqrt{2(1- \cos (2 \pi K^{-1}))+
(\sin (2 \pi K^{-1}))^2 (x^\prime )^2} dt . 
\]
Then consider a variation $x(t) \to x(t,\lambda)$ with $x(t,0)=x(t)$ 
and $x(\pm r,\lambda)=1$, so $\lambda$ is the variation parameter.  
Note that we are only considering rotationally invariant variations here, as was done by 
Machigashira \cite{Ma1}.  An interesting question that we do not address here is whether 
we are also in effect considering variations that are not rotationally invariant as well, by 
some semi-discrete version of the symmetric criticality principle, see \cite{Pa}.  Set
\[
x^{\prime}:=\frac{\partial x}{\partial t}, \ x_{\lambda}:=\frac{\partial x}{\partial t}, \ (x^{\prime})_{\lambda}:=\frac{\partial^2 x}{\partial \lambda \partial t}.
\] 

We wish to have that the following derivative with respect to $\lambda$ is zero, where $c:=\cos (2 \pi K^{-1})$ 
and $s:=\sin (2 \pi K^{-1})$ and $D:=2(1-c)+s^2 (x^\prime)^2$:
\[
\left. \frac{d}{d\lambda} A(\lambda ) \right|_{\lambda=0} = 
\left. \int_{-r}^r \frac{x_\lambda D +x x^\prime (x^\prime )_\lambda s^2}{\sqrt{D}} dt \right| _{\lambda=0}
\]
\[
= \int_{-r}^r \left( \hat{x} \sqrt{D}+s^2 \hat{x}^\prime 
\frac{1}{2} \frac{((x(t))^2)^\prime}{\sqrt{D}} \right) dt \; , 
\]
when $x(t,\lambda) = x(t) +\lambda \cdot \hat{x}(t) +{\mathcal{O}}(\lambda^2)$.  
Then, using integration by parts, we wish to have, with $x=x(t)$, 
\[  0 = \int_{-r}^r \hat{x} \left( \sqrt{D} - 
s^2 \frac{2(1-c) ((x^{\prime})^2+x x^{\prime \prime}) +s^2(x^{\prime})^4}{\sqrt{D}^3} \right) dt
\]
\[
=2 \int_{-r}^r \hat{x} \left( (1-c)^2 \frac{2-(1+c)(x x^{\prime \prime}-(x^{\prime})^2)}{\sqrt{D}^3} \right) dt
\]
for all variations, and this implies 
\[
x x^{\prime\prime} - (x^\prime)^2 = 2 (1+c)^{-1},
\]
with solutions ($c_2$, $c_3$ are free constants) 
\[
x=c_1 e^{-c_3 t} + c_2 e^{c_3 t} \ , \ \ \ c_1 = (2 (1+c) c_2 c_3^2)^{-1} \ .
\]
The conditions $x(\pm r)=1$ imply $c_1=c_2=(2 \cosh (c_3 r))^{-1}$, so we obtain
\begin{equation}\label{neweqn21}
x(t)=2c_1 \cosh (c_3 t)=\frac{\cosh (c_3 t)}{\cosh (c_3 r)}.
\end{equation}
Then automatically $x^\prime(0)=0$.

These catenoids have been determined here using a variational property, like the $M^{va}_{pd,rs}$-catenoids were, so we call them $M^{va}_{ps,rd}$-catenoids. 

From the above relations amongst the $c_j$, we obtain the following equation:
\begin{equation}\label{fromc1c2c3}
(\cosh (c_3 r))^2 =\frac{c_3 ^2 (1+c)}{2}.
\end{equation}
Thus $c_3$ is determined by $r$.  

For $r$ that allow for solutions $c_3$ to \eqref{fromc1c2c3}, 
a profile curve of an $M^{va}_{ps,rd}$-catenoid is $c_1(t)$ as 
in \eqref{neweqn20} with $x(t)$ as in \eqref{neweqn21}.  Rescaling 
this $c_1$ by $\cosh (c_3 r)$ and appropirately rescaling the parameter $t$, 
we find that this catenoid's profile curve can be parametrized as 
\[
\begin{pmatrix} \cosh \left( \frac{\sqrt{2}t}{\sqrt{1+\cos \frac{2 \pi}{k}}} \right) \\ 0 \\ t \end{pmatrix}.
\]

\begin{figure}
 \begin{center}

 \includegraphics[width=60mm]{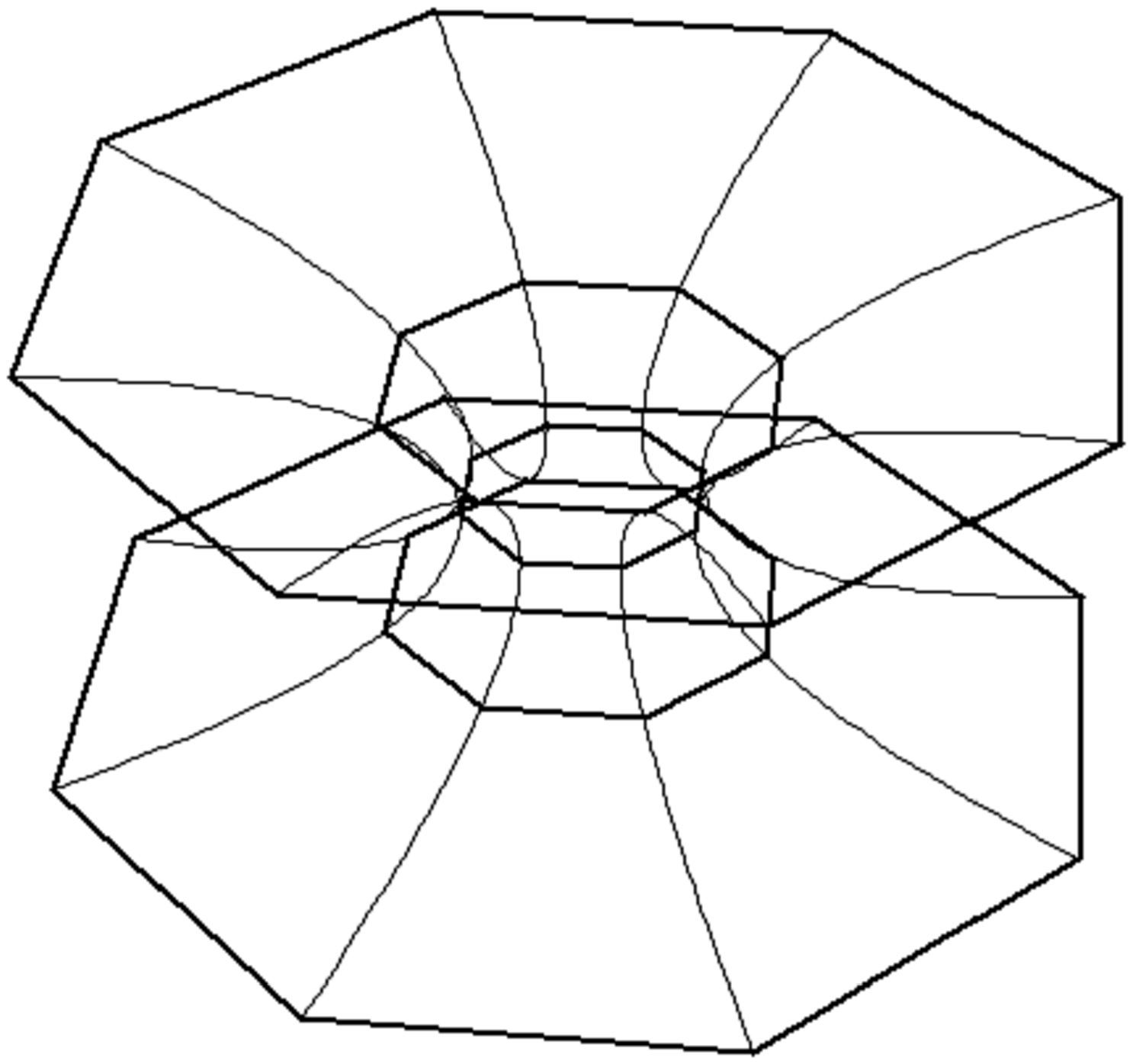}
  
 \end{center}
 \caption{a $M^{va}_{ps,rd}$-catenoid}
 \label{a $M^{va}_{ps,rd}$-catenoid}
\end{figure}

\section{proof of Theorem \ref{comparison}}\label{proof of main theorem}
We list parametrizations of the profile curves of the various catenoids again here in Table \ref{cat-parametrizations}.

Comparing all profile curves, we obtain the following proof of Theorem \ref{comparison}:

\begin{table}
\begin{center}
\begin{tabular}{|c|c|} \hline
 & parametrizations of profile curves \\ \hline
smooth catenoid & $\begin{pmatrix} \cosh t\\ 0\\ t\end{pmatrix} \ \ \ (t \in \mathbb{R})$ \\ \hline
$PR_{pd,rd}^{va}$-catenoid & $\begin{pmatrix} \cosh (n\cdot {\rm arccosh}(1+\frac{\ell^2}{1+\cos \frac{2 \pi}{K}} ))\\ 0\\ n\cdot \ell \end{pmatrix}$ \\ \hline
$M_{pd,rs}^{va}$-catenoid & $\begin{pmatrix} \cosh (n\cdot {\rm arccosh}(1+\frac{1}{2}\ell^2 ))\\ 0\\ n\cdot \ell \end{pmatrix}$ \\ \hline
$BP_{pd,rd}^{in}$-catenoid & $\begin{pmatrix} \cosh (n\cdot {\rm arcsinh}\ell )\\ 0\\ n\cdot \ell \end{pmatrix}$ \\ \hline
$MW_{pd,rs}^{in}$-catenoid & $\begin{pmatrix} \cosh (n\cdot {\rm arcsinh}\ell )\\ 0\\ n\cdot \ell \end{pmatrix}$ \\ \hline
$MW_{ps,rd}^{in}$-catenoid & $\begin{pmatrix} \cosh t\\ 0\\ t\end{pmatrix}$ \\ \hline
$M_{ps,rd}^{va}$-catenoid & $\begin{pmatrix} \cosh \left( \frac{\sqrt{2}t}{\sqrt{1+\cos \frac{2 \pi}{K}}} \right) \\ 0\\ t \end{pmatrix}$  \\ \hline
\end{tabular}
\caption{Parametrizations of seven types of catenoids}
\label{cat-parametrizations}
\end{center}
\end{table}

\begin{Proof}
The statements in items 1, 2, 4, 5 and 6 of Theorem \ref{comparison} are obvious, so we prove only item 3 here. By way of contradiction, suppose 
$BP^{in}_{pd,rd}$-catenoid profile curves and $M^{va}_{pd,rs}$-catenoid profile curves can be the same. 
From the parametrizations in Table \ref{cat-parametrizations}, 
\begin{equation}\label{BP-M}
\cosh ({\rm arc}\sinh \ell )=1+\frac{1}{2}\ell ^2.
\end{equation}
Since $\cosh ({\rm arc}\sinh \ell )=\sqrt{1+\ell ^2}$, \eqref{BP-M} implies $\ell =0$, which does not occur. Similarly, 
suppose $BP_{pd,rd}^{in}$-catenoid profile curves and $PR_{pd,rd}^{va}$-catenoid profile curves can be the same.  Then 
\[
\cosh ({\rm arc}\sinh \ell )=1+\frac{1}{1+\cos \frac{2 \pi}{K}}\ell ^2
\]
\begin{equation}\label{BP-PR}
\Leftrightarrow \left( \frac{-2}{1+\cos \frac{2 \pi}{K}}+1 \right) \ell^2 =\left( \frac{1}{1+\cos \frac{2 \pi}{K}} \right) ^2 \ell ^4.
\end{equation}
The left-hand-side of (\ref{BP-PR}) is negative and the right-hand-side of (\ref{BP-PR}) is positive, which is impossible, proving the theorem.
\end{Proof}


\begin{thebibliography}{99}
\bibitem{BP1} A. I. Bobenko and U. Pinkall, Discrete isothermic surfaces, J. Reine Angew. Math., 475 (1996), 187-208.
\bibitem{BP2} A. I. Bobenko and U. Pinkall, Discretization of surfaces and integrable systems, Oxford Lecture Ser. Math. Appl., 16. Oxford Univ. Press (1998). 3-58.
\bibitem{BPW1} A. I. Bobenko, H. Pottmann, and J. Wallner, A curvature theory for discrete surfaces based on
mesh parallelity, Math. Annalen, 348 (2010), 1-24.
\bibitem{H-J1} U. Hertrich-Jeromin, Transformations of discrete isothermic nets and discrete cmc-1 surfaces in hyperbolic space, Manusc. Math. 102 (2000), 465-486.
\bibitem{H-JHP1} U. Hertrich-Jeromin, T. Hoffmann and U. Pinkall, A discrete version of the Darboux transform for isothermic surfaces, Oxford Lecture Ser. Math. Appl., 16. Oxford Univ. Press (1998). 59-81.
\bibitem{Ho1} T. Hoffmann, Discrete differential geometry of curves and surfaces, MI Lecture Note 18, 2009.
\bibitem{Ma1} Y. Machigashira, Piecewise truncated conical minimal surfaces and the Gauss hypergeometric functions, J.M.I., 2012.
\bibitem{MW1} C. Mueller and J. Wallner, Oriented mixed area and discrete minimal surfaces, Discrete Comput. Geom. 43 (2010), 303-320.
\bibitem{MW2} C. Mueller and J. Wallner, Semi-discrete isothermic surfaces. To appear in Results Math.
\bibitem{Pa} R. Palais, The principal of symmetric criticality, Comm. Math. Phys. 69 (1979), 19-30.
\bibitem{PR1} K. Polthier and W. Rossman, Discrete constant mean curvature surfaces and their index, J. Reine. U. Angew. Math. 549 (2002), 47-77.
\bibitem{Ro1} W. Rossman, Discrete constant mean curvature surfaces via conserved quantities, MI Lecture Note 25, 2010.
\bibitem{Wa1} J. Wallner, On the semidiscrete differential geometry of A-surfaces and K-surfaces. To appear in J. Geometry.
\end{thebibliography}
\end{document}